\newcommand{\xlabel}[2]{{\def\@currentlabel{#2}\label{#1}}#2}
\newcommand{\romanref}[1]{{\rm\ref{#1}}}
\newcommand{\RR}{\mathbb{R}}
\newcommand{\ZZ}{\mathbb{Z}}
\newcommand{\GL}{\operatorname{GL}}
\newcommand{\Int}{\operatorname{Int}}
\newcommand{\conv}{\operatorname{conv}}
\newcommand{\argmax}{\operatorname*{ArgMax}}
\newcommand{\vvert}{\operatorname*{vert}}
\theoremstyle{plain}   
\newtheorem{thm}{Theorem}[section]
\newtheorem{lemma}[thm]{Lemma}
\newtheorem{lem}[thm]{Lemma}
\newtheorem{proposition}[thm]{Proposition}
\newtheorem{prop}[thm]{Proposition}
\newtheorem{conjecture}[thm]{Conjecture}
\theoremstyle{remark}  
\newtheorem{remark}[thm]{Remark}
\newtheorem{example}[thm]{Example}
\newtheorem{ex}[thm]{Example}
\newtheorem*{acknowledgment}{Acknowledgment}
\theoremstyle{definition}  
\newtheorem{definition}[thm]{Definition}
\newtheorem{defi}[thm]{Definition}
\begin{document}

\title{Separation of integer points by a hyperplane under some weak notions of discrete convexity}
\author{Takuya Kashimura%
\thanks{Department of 
Mathematical Informatics, 
Graduate School of Information Science and Technology, 
University of Tokyo.},
Yasuhide Numata\footnotemark[1]\ \thanks{JST, CREST}  
and 
Akimichi Takemura\footnotemark[1]\ \footnotemark[2]}
\date{February 2010}
\maketitle

\begin{abstract}
We give some sufficient conditions of separation of two
sets of integer points by a hyperplane.  Our conditions are
related to the notion of convexity of sets of integer points and 
are weaker than existing notions.
\end{abstract}

\section{Introduction}
In this paper,
for a given nonempty set $S$ of integer points in $\RR^d$,
we investigate conditions on separation of two
sets $A \subset S$ and $B = S \setminus A$
by an affine hyperplane. Consideration of hyperplane separation naturally leads to various
notions of discrete convexity of $A$ and $B$.   The hyperplane separation theorem
for two disjoint convex sets in  $\RR^d$, 
or the Hahn-Banach separation theorem for infinite dimensional spaces, 
is a fundamental fact and
is a logical basis for various fields such as optimization or game theory.
However, because of the discreteness, separation of sets of integer
points is a subtle
problem.  Murota (\cite{murota98,murota03,murota09}) and his collaborators have developed 
the whole new field of
``discrete convex analysis'' and the hyperplane separation is an important
motivation for the field.

In Murota's works and works by earlier authors, hyperplane separation results
have been proved 
by imposing some nice conditions of discrete convexity of $A$ and $B$.
However for some problems we want to know whether separation holds under weaker conditions
on $S$, $A$ and $B$.
In \cite{hara} we needed a separation result for $\RR^2$ under ``parallelogram condition'' 
given in Definition \ref{def:Parallelogram} below.  
A similar result 
for higher dimension was conjectured in \cite{hara}
and the conjecture is the motivation for the present study.
As shown in Section \ref{sec:finite} we found that there is a 
large gap between $\RR^2$ and $\RR^d$ for $d \ge 3$.

Non-convexity of a set $A$ of integer points is usually characterized by
existence 
of a ``hole'' in $A$.  Recently active research is conducted on various notions
of holes. 
Fano polytopes, which are motivated from algebraic geometry, are
actively investigated from combinatorial viewpoint \cite{kas06,kas08}.  
They are rich sources of polytopes 
with a single hole in its interior.
Empty lattice simplices, whose only integer points are its vertices, 
have been studied in many contexts (see \cite{sebo} and references therein).
In the field of commutative algebra and its application to 
algebraic statistics, holes in a semigroup generated
by a set of integer points are 
of great interest (e.g.\ \cite{hty, Ohsugi-Hibi-ample}).

The organization of this paper is as follows.
In Section \ref{sec:definitions} 
we give definitions and some preliminary facts.
In Section \ref{sec:inifinite} 
we prove that hyperplane separation holds
under very weak condition for the case $S=\ZZ^d$.
In Section \ref{sec:finite}  
we consider finite $A$ and $B$ and
prove separation results when $S\subset \ZZ^2$ is integrally convex 
and when $S\subset \ZZ^2$ is hole free.  
For $\ZZ^d$, $d\ge 3$, we give some counterexamples.
We end the paper with discussion of some open problems 
in Section \ref{sec:conclusion}.

\section{Definitions and some preliminary facts}
\label{sec:definitions}

For a given nonempty set $S$ of integer points in $\RR^d$,
we consider some conditions which concern separation of two
nonempty sets 
$A \subset S$ and $B = S \setminus A$
by an affine hyperplane $H$.  
Throughout this paper we assume that $A,B$ are nonempty disjoint sets
and we denote $S=A\cup B$.
$H$ may be defined by a linear form with irrational coefficients.

We allow 
$H$ to contain points of both $A$ and $B$.
However on $H$ we are again concerned on separation of $A$ and $B$
by an affine space of codimension 1 within  $H$.  Therefore we make
the following definition.

\begin{defi}
Let $L$ be an affine subspace in $\RR^d$ of dimension $l$.
We call an affine subspace $P\subset L$ of dimension $l-1$ 
a {\em hyperplane}
in $L$. We say that {\em $P$ separates $A$  and $B$ in $L$}
if there exist 
two disjoint connected components $R^+$ and $R^-$ of $L\setminus P$
such that 
 \begin{align*}
 R^+ \cup R^- &=        L \setminus P, & 
 R^+ \cap S   &\subset A, &
 R^- \cap S   &\subset B.
 \end{align*}
\end{defi}

\begin{example}
\label{ex}
We consider the case when $A=\Set{0,1,2}$, 
$B=\Set{-1,-2}$. 
Let us take 
the rays $\RR_{>0}$ and $\RR_{<0}$ as $R^+$ and $R^-$, respectively.
Then  $P=\RR\setminus(R^+\cup R^-)$ has only the point $0\in A$.
In this case, $S \cap R^+=\Set{1,2}\subset A$ and 
$S\cap R^-=\Set{-1,-2}\subset B$.
Hence $P=\Set{0}$ separates $A$ and $B$ in $\RR$. 
\end{example}

\begin{defi}[Separation by a sequence of affine hyperplanes, Condition \xlabel{C:Hyperplanes}{H}]
Let  $H_i \ (i \ge 0)$ be an $i$-dimensional affine subspaces  of $\RR^d$ 
and  $H = \{ H_k, \dots, H_d \} \ (k \ge 0)$ 
be a sequence of affine hyperplanes such that
$H_{i-1} \subset H_i \ (i = k+1, \dots, d)$ and $H_d = \RR^d$. 
{\em $A$ and $B$ are separated by $H$},
if $H$ satisfies
\begin{enumerate}
\item $H_k \cap S \subset A$ or $H_k \cap S \subset B$, and 
\item for $k+1 \le i \le d$,  $H_{i-1}$ separates $A$ and $B$ in $H_i$.
\end{enumerate}
We say that $A$ and $B$ satisfy {\em Condition \romanref{C:Hyperplanes}}
if $A$ and $B$ are separated by some $H=\{ H_k, \dots, H_d \}$.
\end{defi}

We consider two examples of separation of $A \subset \ZZ^2$ 
and $B= \ZZ^2 \setminus A$.

\begin{ex}
\label{ex1}
Let $H_1$ be the line
defined by $x_2 = \varpi x_1$ for some irrational number $\varpi$,
$A=\{(x_1, x_2)\in \ZZ^2 | x_2 \ge \varpi x_1 \}$,
and $H_2=\RR^2$. Then 
$H=\{H_1, H_2\}$ separates $A$ and $B=\ZZ^2 \setminus A$.
(Moreover, for any $t$, $H'=\{ \{(t,\varpi t)\} ,H_1, H_2 \}$ separates
 $A$ and $B$.)
\end{ex}

\begin{remark}
For $S=\ZZ^d$
there may be no rational vector $p\in \ZZ^d$
such that a hyperplane $H = \Set{ x \in \RR^d | p_1 x_1 + \dots + p_d x_d = b }$ separates $A$ and $B$,
because $\ZZ^d$ is unbounded.
\end{remark}

\begin{ex}
\label{ex2}
Let $A = \{ (x_1,x_2) \in \ZZ^2 \ | \ x_1 > 0 \} 
\cup \{ (0,x_2) \in \ZZ^2 \ | \ x_2 \ge 0 \}$
as in Figure~\ref{fig1}.
If we set
\begin{align*}
	H_0 &= \{ (0,0) \}, & H_1 &= \{ (0,x_2) \, | \, x_2 \in \RR \}, &
	H_2 &= \RR^2,
\end{align*}
then $H = \{ H_0,H_1,H_2 \}$ separates $A$ and $B=\ZZ^2 \setminus A$.
Moreover, for each $t\in [-1,0]$, $\{ \{ (0,t) \} ,H_1,H_2 \}$  separates $A$ and $B$.
\begin{figure}
\begin{center}
	\includegraphics[clip, width = 0.35\hsize]{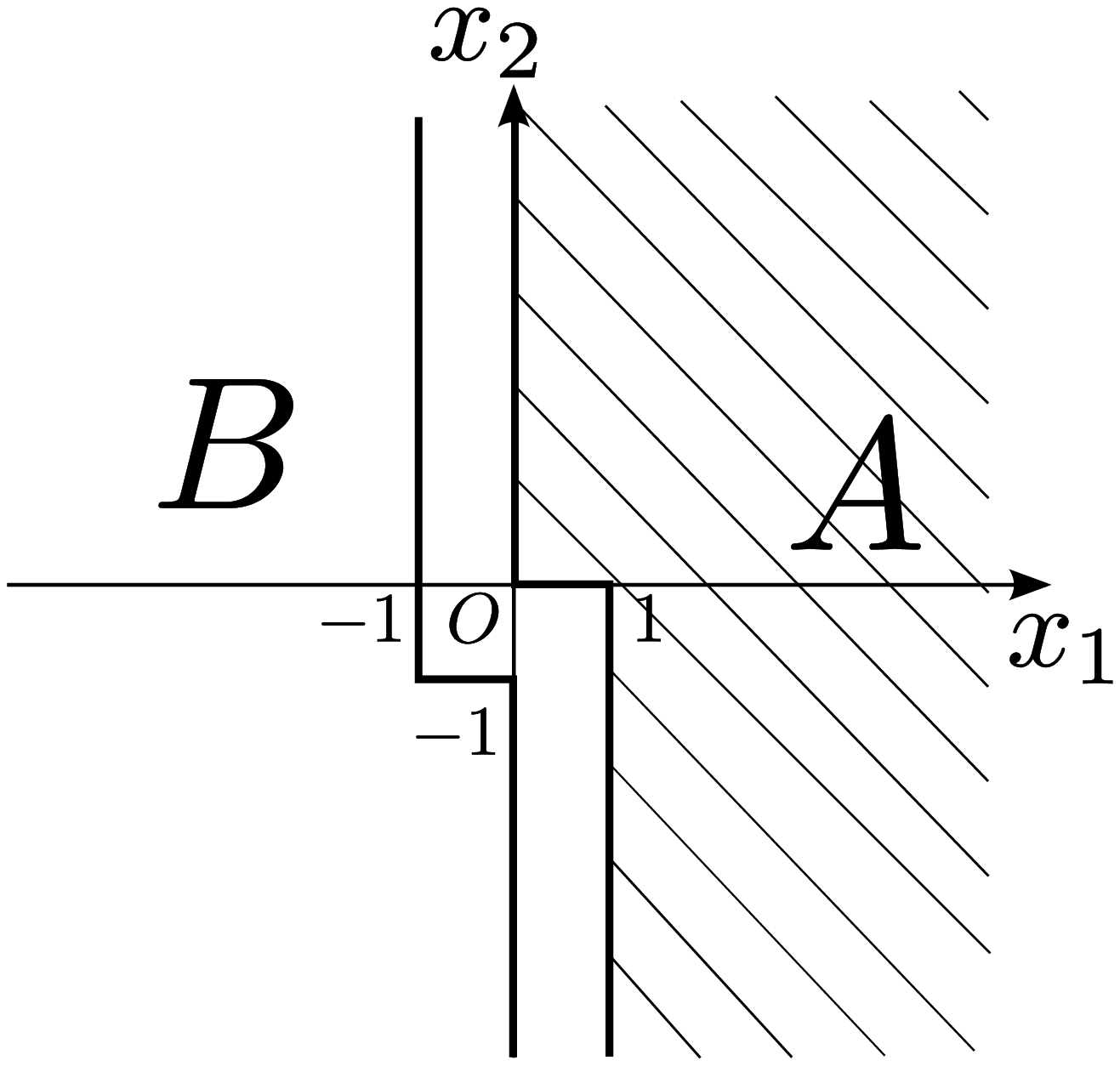}
\caption{Example \ref{ex2}}
\label{fig1}
\end{center}
\end{figure}
\end{ex}

Next we consider the following condition.
\begin{defi}[Ray condition, Condition \xlabel{C:Ray}{R}]
We say that $A$ and $B$ satisfy {\em Condition \ref{C:Ray}} if
for each line $L$ such that $A\cap L \neq \emptyset$ and $B\cap L \neq \emptyset$,
there exists a ray $L' \subset L$ such that $A \cap L = L' \cap S$.
\end{defi}
\begin{remark}
Condition \ref{C:Ray} is equivalent to the following:
for each line $L$ such that $A\cap L \neq \emptyset$ and $B\cap L \neq \emptyset$,
there exists $p\in L$ such that 
$\{ p \}$ separates $A$ and $B$ in $L$.
\end{remark}

Furthermore we consider the following condition.
\begin{defi}[Parallelogram condition, Condition \xlabel{C:Parallelogram}{P}]
\label{def:Parallelogram}
We say that $A, B$ satisfy {\em $k$-parallelogram condition} if
\begin{align*}
a_1, a_2, \dots, a_{k'} \in A, \ 
b_1, b_2, \dots, b_{k'} \in B \quad \Rightarrow \quad
\sum_{i=1}^{k'} a_i \neq \sum_{i=1}^{k'} b_i
\end{align*}
for all $k'\leq k$.
We call the $2$-parallelogram condition
{\em Condition \romanref{C:Parallelogram}}.
\end{defi}
\begin{remark}
By definition, the $k$-parallelogram condition implies 
the $(k-1)$-parallelogram condition.
\end{remark}
When we consider Condition  \romanref{C:Parallelogram},
we may have $a_1=a_2$.  
Then  the condition 
says that no point of $A$ is the mid-point of two points in $B$.
 Condition  \romanref{C:Parallelogram}
was 
considered in Hara et al.\ \cite{hara} in a statistical problem.
They showed that when $S=A\cup B$ is a $2$-dimensional rectangle, 
and if $A$ and $B$ are ``monotone'' and satisfy Condition 
\romanref{C:Parallelogram}, 
then there exists a line separating $A$ and $B$ 
(see Appendix E in Hara et al.\ \cite{hara}). 

We state the following basic fact on implications of the above 
conditions.

\begin{lem}
\label{lem:hyperplanes=>parallelogram}
Condition \romanref{C:Hyperplanes} implies Condition \romanref{C:Parallelogram}.
\end{lem}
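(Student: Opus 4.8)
The plan is to argue by contradiction. Assume that $a_1,a_2\in A$ and $b_1,b_2\in B$ violate Condition~\romanref{C:Parallelogram}, i.e.\ that $a_1+a_2=b_1+b_2$, so that the two pairs share a common midpoint, and assume at the same time that $A$ and $B$ satisfy Condition~\romanref{C:Hyperplanes}, witnessed by a flag $H=\{H_k,\dots,H_d\}$. The goal is to show these two assumptions are incompatible. The engine of the argument is a descending induction on $i$ from $d$ down to $k$, establishing at each stage that all four points $a_1,a_2,b_1,b_2$ lie in $H_i$.

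The base case $i=d$ is immediate, since $H_d=\RR^d$. For the inductive step, suppose the four points all lie in $H_i$ for some $i>k$. Because $H_{i-1}$ separates $A$ and $B$ in $H_i$, the complement $H_i\setminus H_{i-1}$ is the disjoint union of its two connected components $R_i^+$ and $R_i^-$ with $R_i^+\cap S\subset A$ and $R_i^-\cap S\subset B$. Since $H_{i-1}$ is an affine hyperplane of the affine space $H_i$, these two components are precisely the open half-spaces cut out by a nonconstant affine functional $\ell_i$ on $H_i$ with $H_{i-1}=\{\ell_i=0\}$, and I fix the sign of $\ell_i$ so that $R_i^+=\{\ell_i>0\}$ and $R_i^-=\{\ell_i<0\}$.

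Now I combine the sign constraints with affineness. As $a_j\in A\subset S$ and $R_i^-\cap S\subset B$ with $A\cap B=\emptyset$, the point $a_j$ cannot lie in $R_i^-$, whence $\ell_i(a_j)\ge 0$; symmetrically $\ell_i(b_j)\le 0$. Writing $\ell_i(x)=\langle c_i,x\rangle+e_i$ and using $a_1+a_2=b_1+b_2$ yields $\ell_i(a_1)+\ell_i(a_2)=\ell_i(b_1)+\ell_i(b_2)$. The left-hand side is $\ge 0$ and the right-hand side is $\le 0$, so both equal $0$; since each summand on the left is nonnegative and each on the right nonpositive, in fact $\ell_i(a_1)=\ell_i(a_2)=\ell_i(b_1)=\ell_i(b_2)=0$. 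Hence all four points lie in $H_{i-1}$, which completes the inductive step.

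Running the induction down to $i=k$ forces $a_1,a_2,b_1,b_2$ all into $H_k$. But the first defining property of the flag requires $H_k\cap S\subset A$ or $H_k\cap S\subset B$, which is impossible once $H_k\cap S$ contains both $a_1\in A$ and $b_1\in B$, contradicting $A\cap B=\emptyset$. This contradiction proves Condition~\romanref{C:Parallelogram}. The step I expect to be the crux is the identification of the components $R_i^+,R_i^-$ with the two open half-spaces of a single affine functional $\ell_i$: this is exactly what lets the midpoint identity $a_1+a_2=b_1+b_2$ be transported down the flag one level at a time, and the rest of the argument is bookkeeping.
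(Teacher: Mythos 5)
Your proof is correct and follows essentially the same route as the paper's: the paper also observes that if some point, say $a_1$, lies in an open half-space of $H_{d-1}$ then the midpoints $(a_1+a_2)/2$ and $(b_1+b_2)/2$ are forced apart by the sign of the defining functional, and otherwise all four points lie in $H_{d-1}$ and one inducts down the flag to $H_k$, where the base condition gives a contradiction. Your write-up merely makes the descending induction and the functional $\ell_i$ explicit.
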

\begin{proof}
Consider the case that at least one
point, say $a_1$, among $a_1,a_2 \in A, b_1,b_2 \in B$ 
belongs to an open half-space on one side of $H_{d-1}$. Then
$(a_1+a_2)/2$ also belongs to the open half-space and can not be
equal to $(b_1+b_2)/2$.  
If all of $a_1,a_2,b_1,b_2$ belong to $H_{d-1}$, 
then we can use the induction on dimension.
\end{proof}

By the same proof as 
in Lemma \ref{lem:hyperplanes=>parallelogram}, we have the following.
\begin{lemma} 
Condition \romanref{C:Hyperplanes} 
implies the $k$-parallelogram condition 
for every $k\ge 2$.
\end{lemma}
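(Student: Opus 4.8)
The plan is to mirror the proof of Lemma \ref{lem:hyperplanes=>parallelogram}, inducting downward along the separating sequence $H=\{H_k,\dots,H_d\}$, with the midpoint computation replaced by a statement about the defining linear form of each separating hyperplane. Fix $k'\ge 1$ and points $a_1,\dots,a_{k'}\in A$ and $b_1,\dots,b_{k'}\in B$; the goal is to show $\sum_i a_i\neq\sum_i b_i$, and since nothing below caps $k'$, this covers the $k$-parallelogram condition for every $k\ge 2$ at once.

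First I would examine the top separation step, where $H_{d-1}$ separates $A$ and $B$ in $\RR^d$. Writing $H_{d-1}=\Set{x | \langle p,x\rangle=c}$ with $R^+=\Set{x | \langle p,x\rangle>c}$ and $R^-=\Set{x | \langle p,x\rangle<c}$, the defining property $R^+\cap S\subset A$, $R^-\cap S\subset B$ together with $A\cap B=\emptyset$ forces $A\subset R^+\cup H_{d-1}$ and $B\subset R^-\cup H_{d-1}$, hence $\langle p,a_i\rangle\ge c\ge\langle p,b_j\rangle$ for all $i,j$. The key step, which replaces the midpoint argument, is to sum these estimates over $i=1,\dots,k'$, giving $\langle p,\sum_i a_i\rangle\ge k'c\ge\langle p,\sum_i b_i\rangle$. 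If either inequality is strict, i.e.\ if at least one $a_i$ or one $b_j$ lies off $H_{d-1}$, then $\langle p,\sum_i a_i\rangle\neq\langle p,\sum_i b_i\rangle$ and we are done immediately.

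The remaining case is that all of $a_1,\dots,a_{k'},b_1,\dots,b_{k'}$ lie on $H_{d-1}$, where I would recurse. Since $\{H_k,\dots,H_{d-1}\}$ separates $A\cap H_{d-1}$ and $B\cap H_{d-1}$ in $H_{d-1}$ (this is exactly what clauses (i) and (ii) of Condition \romanref{C:Hyperplanes} supply for the lower-dimensional subspace), and since $a_i\in A\cap H_{d-1}$, $b_j\in B\cap H_{d-1}$, the induction hypothesis on the dimension of the ambient space applies and yields $\sum_i a_i\neq\sum_i b_i$. The descent is well founded because each recursion lowers the dimension by one and terminates at $H_k$; there clause (i) says $H_k\cap S\subset A$ or $H_k\cap S\subset B$, which, as $A\cap B=\emptyset$, is incompatible with having both $a_1\in A$ and $b_1\in B$ on $H_k$. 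Thus the all-on-$H_{d-1}$ branch cannot persist all the way down, and every case closes.

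The one genuinely new ingredient beyond Lemma \ref{lem:hyperplanes=>parallelogram} is the passage from midpoints to sums: for $k=2$ one argues geometrically that the midpoint of an interior point and a closed-half-space point stays strictly inside, whereas for general $k'$ the cleanest formulation is the linear estimate $\sum_i\langle p,a_i\rangle\ge k'c\ge\sum_i\langle p,b_i\rangle$, with equality forcing every term to equal $c$. I expect this reformulation to be the only subtle point; the rest is a routine transcription of the downward induction already used for Condition \romanref{C:Parallelogram}.
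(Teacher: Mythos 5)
Your proposal is correct and is essentially the paper's own argument: the paper simply states that the lemma follows ``by the same proof as in Lemma \ref{lem:hyperplanes=>parallelogram}'', i.e.\ the two-point midpoint argument plus downward induction on dimension, which is exactly what you carry out. Your only addition is to make explicit the generalization from midpoints to sums via the linear estimate $\langle p,\sum_i a_i\rangle\ge k'c\ge\langle p,\sum_i b_i\rangle$ with equality forcing all points onto $H_{d-1}$, which is precisely the detail the paper leaves implicit.
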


So far we have presented conditions concerning two sets $A$ and $B$. 
Now we give a condition of discrete convexity of a single $S$, which is also needed
for relating Condition \ref{C:Ray} to Condition \ref{C:Parallelogram}.

\begin{defi}
[$k$-convexity]
\label{def:line-convex}
$S \subset \ZZ^d$
is {\em $k$-convex} if $S$ satisfies the following:
\begin{align*}
a_1,\ldots,a_{k+1} \in S \Rightarrow \conv\set{a_1,\ldots,a_{k+1}} \cap
 \ZZ^d \subset S,
\end{align*}
where $\conv\set{a_1,\ldots,a_{k+1}}$ denotes the convex hull of
 $a_1,\ldots,a_{k+1}$ in $\RR^d$.
\end{defi}

\begin{remark}
If $S\subset \ZZ^d$ is $k$-convex, then $S$  is $(k-1)$-convex.
\end{remark}

\begin{lem}
\label{lem:parallelogram=>ray}
If $S=A\cup B$ is $1$-convex, then
Condition \romanref{C:Parallelogram}
implies
Condition \romanref{C:Ray}.
\end{lem}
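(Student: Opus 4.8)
The plan is to verify Condition \ref{C:Ray} one line at a time. Fix any line $L$ meeting both $A$ and $B$; since $A,B\subset\ZZ^d$ this forces $L$ to carry at least two lattice points, so its lattice points form a single arithmetic progression $p+kv$ $(k\in\ZZ)$ with $v$ a primitive integer vector. Writing $S\cap L=\set{p+kv : k\in K}$ for an index set $K\subseteq\ZZ$, I would first invoke $1$-convexity: if $i,k\in K$ then every lattice point of $\conv\set{p+iv,p+kv}$ lies in $S$, and because $v$ is primitive these are exactly the $p+jv$ with $i\le j\le k$. Hence $K$ is a (possibly infinite) block of consecutive integers, with no gaps. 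This contiguity is the crucial consequence of $1$-convexity and is precisely what fails when $S$ has a hole.

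Next I would translate the goal into combinatorics of a label sequence. Each $k\in K$ is labelled $A$ or $B$ according to which set contains $p+kv$, and Condition \ref{C:Ray} on $L$ is equivalent to saying that this sequence, read along increasing $k$, changes label at most once: a single change at position $(s,s+1)$ makes $A\cap L$ equal to the lattice points of $S$ on the half-line $\set{p+\lambda v : \lambda\le s}$ (or $\lambda\ge s+1$ for the opposite orientation), which is the required ray. Since $L$ meets both $A$ and $B$ there is at least one change, so the real task is to rule out two or more changes.

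Suppose, for contradiction, that there are at least two label changes. Consecutive changes necessarily have opposite type, so I can select the first two: an $A\!\to\!B$ change at $(s,s+1)$ and the next $B\!\to\!A$ change at $(t,t+1)$, with $s<t$, giving $p+sv,\,p+(t+1)v\in A$ and $p+(s+1)v,\,p+tv\in B$ (the degenerate case $s+1=t$ being allowed, with $b_1=b_2$). Because $K$ has no gaps all four indices lie in $K$, so all four points lie in $S$ and carry the stated labels. The identity $s+(t+1)=(s+1)+t$ then yields
\begin{align*}
(p+sv)+(p+(t+1)v)=(p+(s+1)v)+(p+tv),
\end{align*}
an equality of the shape $a_1+a_2=b_1+b_2$ with $a_1,a_2\in A$ and $b_1,b_2\in B$, contradicting Condition \ref{C:Parallelogram}. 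Hence at most one change occurs, Condition \ref{C:Ray} holds on $L$, and since $L$ was arbitrary this finishes the argument.

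I expect the only delicate point to be the step where $1$-convexity is used to guarantee that the two lattice points flanking each label change genuinely belong to $S$ (and are therefore labelled); without the gap-freeness of $K$ one can build $S$ with a hole satisfying Condition \ref{C:Parallelogram} but violating Condition \ref{C:Ray}, so this hypothesis cannot be dropped. Once contiguity is in hand, the algebraic heart of the proof, the identity $s+(t+1)=(s+1)+t$, is immediate.
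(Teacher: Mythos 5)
Your proof is correct and rests on the same two ingredients as the paper's: $1$-convexity forces the lattice points of $S$ on $L$ to be a contiguous block of an arithmetic progression, and two adjacent label changes of opposite type yield $a_1+a_2=b_1+b_2$, contradicting Condition \romanref{C:Parallelogram}. The paper phrases this by picking adjacent $a\in A$, $b\in B$ and the nearest $B$-point on the ray beyond $a$, but the four points it extracts and the identity it uses are exactly yours, so this is essentially the same argument.
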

\begin{proof}
 If $A\cap L\neq\emptyset$, $B\cap L \neq \emptyset$, then
 we can find $a \in A$ and $b \in B$ lying next to each other on 
the line $L$ by 1-convexity of $S$. 
 Assume that $a$ is on the right side of $b$, and let $L' \subset L$ denote the ray extending from $a$ not containing $b$. If there exists a member of $B$ in $L' \cap \ZZ^d$,
 then we find the nearest point to $b$ in $B$, denoted by $b'$. 
Let $a'$ denote the next left point of $b'$ in $L' \cap \ZZ^d$. Then we have $a' \in A$ and   
 $a + a' = b + b'$, a contradiction to Condition \ref{C:Parallelogram}. Thus $L' \cap \ZZ^d \subset A$.
 Also we have $(L \setminus L') \cap \ZZ^d \subset B$. Then it follows that $L' \cap \ZZ^d = A \cap L$.
 \end{proof}

 \begin{lem}
 \label{lem:c1=>c3=>c2}
 If $S$ is $1$-convex, then
 \begin{align*}
 \text{ Condition \romanref{C:Hyperplanes} }
 \Rightarrow
 \text{ Condition \romanref{C:Parallelogram} }
 \Rightarrow
 \text{ Condition \romanref{C:Ray}. }
 \end{align*}
 \end{lem}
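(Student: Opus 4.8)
The plan is to observe that this lemma is nothing more than the concatenation of the two implications already established in Lemmas \ref{lem:hyperplanes=>parallelogram} and \ref{lem:parallelogram=>ray}, so that there is essentially no new content to prove; the only thing worth articulating is precisely where the hypothesis of $1$-convexity enters the chain.

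First I would dispose of the implication Condition \romanref{C:Hyperplanes} $\Rightarrow$ Condition \romanref{C:Parallelogram}. This is exactly the assertion of Lemma \ref{lem:hyperplanes=>parallelogram}, and I would emphasize that its proof uses no convexity assumption on $S$ whatsoever: it rests only on the midpoint argument placing $(a_1+a_2)/2$ strictly on one side of the top hyperplane $H_{d-1}$ away from $(b_1+b_2)/2$, together with an induction on dimension for the degenerate case in which all four points lie on $H_{d-1}$. Consequently this first link of the chain is valid for an arbitrary $S$, and in particular whenever $S$ is $1$-convex.

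Next I would invoke Lemma \ref{lem:parallelogram=>ray}, which states precisely that when $S = A \cup B$ is $1$-convex, Condition \romanref{C:Parallelogram} implies Condition \romanref{C:Ray}. This is the sole place in the chain where the $1$-convexity hypothesis is actually consumed: it is what guarantees that along any line $L$ meeting both $A$ and $B$ one can find an adjacent pair consisting of an $A$-point and a $B$-point, which is the starting point of the contradiction argument that forces the $A$-part of $L$ to be a ray. Composing the two implications then yields the stated chain under the single standing assumption that $S$ is $1$-convex.

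Since both links are already established, I do not expect any genuine obstacle. The only point deserving a moment of care is to verify that $1$-convexity is truly unnecessary for the first implication, so that the hypothesis is expended only on the second; a brief re-reading of the proof of Lemma \ref{lem:hyperplanes=>parallelogram} confirms this, and the lemma follows at once.
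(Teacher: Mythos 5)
Your proof is correct and is exactly the paper's own argument: the lemma is simply the concatenation of Lemma \ref{lem:hyperplanes=>parallelogram} (which needs no convexity hypothesis) with Lemma \ref{lem:parallelogram=>ray} (which consumes the $1$-convexity). Your additional remark pinpointing where the hypothesis is actually used is accurate and harmless.
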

 \begin{proof}
The lemma follows from Lemmas \ref{lem:hyperplanes=>parallelogram} and \ref{lem:parallelogram=>ray}.
\end{proof}

In Section \ref{sec:finite} we consider the case when $S$ is a hole free set or an integrally convex set. 
We recall its definition and basic properties (\cite[Section 3.4]{murota03}).
See also \cite{favat-tardella}.
\begin{definition}
\label{def:integrally}
$S\subset \ZZ^d$ is  {\em integrally convex}
if 
\begin{align*} 
x \in \conv (S) \Rightarrow x \in \conv (S\cap N(x)), 
\end{align*}
where $N(x)=\Set{y\in \ZZ^d| \text{$|x_i-y_i|<1$ for $i=1,\ldots,d $}}$
is the integral neighborhood of $x \in \RR^d$.
\end{definition}

This definition is also written as follows.

\begin{proposition}\label{prop:integrally2}
$S \subset \ZZ^d$ is integrally convex if and only if
\begin{align*} 
\conv(S \cap N(x)) = \conv(S) \cap \conv(N(x)) \quad 
(\forall x \in \RR^d). 
\end{align*}
\end{proposition}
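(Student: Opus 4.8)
The plan is to first observe that the inclusion $\conv(S\cap N(x))\subseteq\conv(S)\cap\conv(N(x))$ holds for every $x$ and every $S$, simply because $S\cap N(x)$ is contained in both $S$ and $N(x)$, so its convex hull lies in both $\conv(S)$ and $\conv(N(x))$. Hence the proposition reduces to showing that integral convexity of $S$ is \emph{equivalent} to the reverse inclusion $\conv(S)\cap\conv(N(x))\subseteq\conv(S\cap N(x))$ holding for all $x\in\RR^d$. Along the way I would record the elementary fact that $\conv(N(x))$ is exactly the box $\prod_{i=1}^{d}[\lfloor x_i\rfloor,\lceil x_i\rceil]$, since $N(x)$ consists precisely of the integer vertices of this box; in particular $x\in\conv(N(x))$ for every $x$.

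For the direction assuming the stated equality, I would take any $x\in\conv(S)$. Since $x\in\conv(N(x))$, we get $x\in\conv(S)\cap\conv(N(x))=\conv(S\cap N(x))$, which is precisely the defining property of integral convexity in Definition \ref{def:integrally}. This direction is immediate once the trivial inclusion and the membership $x\in\conv(N(x))$ are in hand.

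For the converse, assume $S$ is integrally convex and fix $x$; I must prove $\conv(S)\cap\conv(N(x))\subseteq\conv(S\cap N(x))$. Given $y$ in the left-hand set, note $y\in\conv(S)$, so applying integral convexity at the point $y$ itself yields $y\in\conv(S\cap N(y))$. The argument then hinges on the geometric claim that $N(y)\subseteq N(x)$ whenever $y\in\conv(N(x))$: granting this, $S\cap N(y)\subseteq S\cap N(x)$, hence $y\in\conv(S\cap N(y))\subseteq\conv(S\cap N(x))$, as required.

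The main obstacle is establishing $N(y)\subseteq N(x)$, which I would prove coordinatewise. Fix a coordinate $i$; since $y\in\conv(N(x))$ we have $y_i\in[\lfloor x_i\rfloor,\lceil x_i\rceil]$. If $x_i\in\ZZ$ the interval degenerates to a point, forcing $y_i=x_i$ and then $z_i=x_i$ for any $z\in N(y)$, so $|x_i-z_i|=0<1$. If $x_i\notin\ZZ$, any integer $z_i$ with $|y_i-z_i|<1$ must lie in $\{\lfloor x_i\rfloor,\lceil x_i\rceil\}$, and both of these satisfy $|x_i-z_i|<1$ because $x_i$ lies strictly between consecutive integers. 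In either case $z\in N(y)$ forces $z_i$ to be an integer with $|x_i-z_i|<1$ in every coordinate, i.e.\ $z\in N(x)$, completing the claim and the proof.
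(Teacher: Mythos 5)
Your proof is correct and complete. Note that the paper itself gives no proof of Proposition \ref{prop:integrally2}; it is stated as a known reformulation of Definition \ref{def:integrally} with a reference to Murota's book, so there is nothing to compare against line by line. Your argument supplies the missing details cleanly: the inclusion $\conv(S\cap N(x))\subseteq\conv(S)\cap\conv(N(x))$ is indeed trivial; the implication from the displayed equality to integral convexity follows immediately from $x\in\conv(N(x))$; and the converse direction correctly isolates the one nontrivial geometric fact, namely that $y\in\conv(N(x))$ forces $N(y)\subseteq N(x)$. Your coordinatewise verification of that fact is sound in both cases ($x_i\in\ZZ$ and $x_i\notin\ZZ$), including the boundary subcases where $y_i$ equals $\lfloor x_i\rfloor$ or $\lceil x_i\rceil$, and the identification of $\conv(N(x))$ with the box $\prod_{i=1}^d[\lfloor x_i\rfloor,\lceil x_i\rceil]$ is exactly the right auxiliary observation to make both directions work.
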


\begin{definition}
$S\subset \ZZ^d$ is  {\em hole free}
if $S=\conv(S)\cap \ZZ^d$.
\end{definition}

\begin{proposition}
An integrally convex set is hole free.
\end{proposition}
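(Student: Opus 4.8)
The plan is to establish the one nontrivial inclusion. Since $S\subset\ZZ^d$, we automatically have $S\subset\conv(S)\cap\ZZ^d$, so hole-freeness reduces to proving the reverse containment $\conv(S)\cap\ZZ^d\subset S$; that is, every integer point lying in $\conv(S)$ must already belong to $S$. This is precisely where I would bring the integral convexity hypothesis to bear.

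The key observation I would exploit is that the integral neighborhood collapses at integer points. The set $N(x)$ is by definition already contained in $\ZZ^d$, and if $x\in\ZZ^d$, then for any $y\in N(x)$ each coordinate difference $|x_i-y_i|$ is a nonnegative integer strictly less than $1$, hence equal to $0$. Therefore $y=x$, and we conclude $N(x)=\set{x}$ whenever $x$ is an integer point. With this in hand the proof is immediate: take any $x\in\conv(S)\cap\ZZ^d$. Integral convexity gives $x\in\conv(S\cap N(x))=\conv(S\cap\set{x})$. Since a convex hull that contains the point $x$ cannot be empty, the set $S\cap\set{x}$ must be nonempty, which forces $x\in S$.

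I do not expect any serious obstacle here; once the identity $N(x)=\set{x}$ for integer $x$ is recorded, the statement is a one-line application of the definition of integral convexity. The only point deserving a moment's care is the emptiness bookkeeping: one should note explicitly that $\conv$ of the empty set is empty, so that the membership $x\in\conv(S\cap\set{x})$ genuinely compels $x\in S$ rather than being vacuously satisfiable. After that, combining $S\subset\conv(S)\cap\ZZ^d$ with the inclusion just proved yields $S=\conv(S)\cap\ZZ^d$, which is exactly the definition of hole-freeness.
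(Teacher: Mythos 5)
Your proof is correct: the observation that $N(x)=\set{x}$ for $x\in\ZZ^d$ immediately reduces integral convexity at integer points to the desired inclusion $\conv(S)\cap\ZZ^d\subset S$, and your handling of the empty-hull case is the right bit of care. The paper states this proposition without proof, and your argument is exactly the standard one-line verification it implicitly relies on.
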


\begin{remark}
For  $S\subset \ZZ^d$,
$S$ is $d$-convex if and only if $S$ is hole free. 
\end{remark}

\section{Separation of the whole integer lattice}
\label{sec:inifinite}
In this section, we 
consider the case $S = \ZZ^d$. We prove the following theorem.
\begin{thm} 
\label{thm:whole-lattice}
Let $S = \ZZ^d$.  Then for $A \subset S$ and $B = S \setminus A$,  Conditions \ref{C:Hyperplanes}, \ref{C:Ray}, \ref{C:Parallelogram} are all equivalent.
\end{thm}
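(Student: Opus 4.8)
The plan is to prove the three implications by closing them into a cycle. Since $S=\ZZ^d$ is $k$-convex for every $k$ (the convex hull of integer points of $\ZZ^d$ stays in $\ZZ^d$), it is in particular $1$-convex, so Lemma \ref{lem:c1=>c3=>c2} already gives Condition \ref{C:Hyperplanes} $\Rightarrow$ Condition \ref{C:Parallelogram} $\Rightarrow$ Condition \ref{C:Ray}. Hence it suffices to prove the single remaining implication Condition \ref{C:Ray} $\Rightarrow$ Condition \ref{C:Hyperplanes}, which is where all the work lies.

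I would prove this by induction on $d$, in the slightly more general setting where $S$ is an arbitrary affine lattice of rank $d$ (the integer points of a rational affine $d$-subspace), since the inductive step naturally descends onto such sublattices; any rank-$d$ affine lattice is affinely isomorphic to $\ZZ^d$, and Conditions \ref{C:Ray}, \ref{C:Hyperplanes} and $1$-convexity are all preserved under such isomorphisms. The base case $d=1$ is immediate: Condition \ref{C:Ray} says $A$ is a ray on the line, so a single point $H_0$ placed just beyond the last element of the smaller side separates $A$ and $B$, giving the flag $\{H_0,\RR\}$.

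For the inductive step the whole problem reduces to the following \emph{main lemma}: if $(A,B)$ satisfies Condition \ref{C:Ray} on $S=\ZZ^d$, then there is a $(d-1)$-dimensional hyperplane $H_{d-1}$ that separates $A$ and $B$ in $\RR^d$, i.e.\ one open side contains no point of $B$ and the other contains no point of $A$. Granting this, set $S'=S\cap H_{d-1}$, $A'=A\cap H_{d-1}$, $B'=B\cap H_{d-1}$. If $A'=\emptyset$ or $B'=\emptyset$ (in particular if $S'=\emptyset$) then $H_{d-1}\cap S$ lies entirely in $A$ or in $B$, and the flag $\{H_{d-1},\RR^d\}$ already witnesses Condition \ref{C:Hyperplanes} with $k=d-1$. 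Otherwise $S'$ is an affine lattice of rank at most $d-1$ on which Condition \ref{C:Ray} is inherited (a line inside $H_{d-1}$ is a line of $\RR^d$), so by the induction hypothesis $(A',B')$ is separated by a flag $\{H_k,\dots,H_{d-2},H_{d-1}\}$ inside $H_{d-1}$; prepending $\RR^d$ gives $\{H_k,\dots,H_{d-1},\RR^d\}$, and one checks directly that the two defining properties of Condition \ref{C:Hyperplanes} transfer, using $H_i\cap S=H_i\cap S'$ together with $A'\subset A$ and $B'\subset B$ for $i\le d-1$.

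It remains to prove the main lemma, and this is the step I expect to be the real obstacle. The natural formulation is that $\conv(A)$ and $\conv(B)$ can be weakly separated by a hyperplane, which by the classical separation theorem for convex sets is equivalent to $\operatorname{relint}\conv(A)\cap\operatorname{relint}\conv(B)=\emptyset$; so I would aim to derive a contradiction from the existence of a common relative-interior point $z$. The difficulty is precisely that Condition \ref{C:Ray} constrains only individual lines, whereas a common relative-interior point is an intrinsically high-dimensional ``crossing'' of the two hulls: writing $z$ as a convex combination of points of $A$ and of $B$ produces many points rather than a single offending line, and the endpoints of chords of $\conv(A)$ through $z$ need not themselves lie in $A$. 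To bridge this gap I would first establish an \emph{orientation-consistency} fact from Condition \ref{C:Ray}: for each primitive integer direction $v$, all lines in direction $v$ meeting both $A$ and $B$ carry the ray $A\cap L$ in one and the same orientation, so that after fixing the sign of $v$ one has $A+v\subset A$; the proof compares two parallel lines and, if the orientations disagreed, produces on a third line an integer point forced simultaneously into $A$ and $B$. These ascent directions describe the recession structure of $A$, and $z$ may be taken rational because $\conv(A)\cap\conv(B)$ is cut out by rational data; combining the rational crossing relation at $z$ with the $1$-convexity of $\ZZ^d$ and the orientation-consistency should let one select a single line $L$ meeting both $A$ and $B$ on which $A\cap L$ fails to be a ray, contradicting Condition \ref{C:Ray}. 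Handling the points of $A$ and $B$ lying \emph{on} a candidate separating hyperplane (the boundary case where the hulls touch without crossing) is the delicate point, and is exactly what forces the recursive descent onto $H_{d-1}$ rather than a one-shot separation.
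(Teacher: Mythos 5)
Your reduction to the single implication Condition \ref{C:Ray} $\Rightarrow$ Condition \ref{C:Hyperplanes} via Lemma \ref{lem:c1=>c3=>c2}, and your outer descent (find one separating hyperplane $H_{d-1}$, then recurse inside it on $S\cap H_{d-1}$), both match the paper's structure (Lemmas \ref{lem:a hyperplane sepelates fullspace} and \ref{lemma:seq of hyper planes seterates fullspace}). The problem is that everything then hinges on your ``main lemma''---that Condition \ref{C:Ray} alone forces the existence of a single hyperplane weakly separating $A$ from $B$ in $\RR^d$---and this is exactly the step you do not prove. Your sketch (pass to a common relative-interior point $z$ of $\conv(A)$ and $\conv(B)$, establish orientation-consistency of the ray directions across parallel lines, then ``select a single line $L$ on which $A\cap L$ fails to be a ray'') is hedged at every stage (``should let one select\dots'', ``is the delicate point''), and the final selection of the offending line is never exhibited. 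Note also that for infinite $A\subset\ZZ^d$ the sets $\conv(A)$ and $\conv(B)$ need not be rational polyhedra (consider $A=\{x\in\ZZ^2 : x_2\ge \varpi x_1\}$ with $\varpi$ irrational), so ``cut out by rational data'' is not available as stated; one can still locate a rational point in a nonempty intersection of relative interiors because the affine hulls are rational, but this must be argued, and the subsequent combinatorial contradiction is the real content and is missing.

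The paper closes this gap by a different, constructive device (Proposition \ref{lemma:inductionstep}): it builds the separating hyperplane bottom-up, one dimension at a time. Starting from a point separating $A$ and $B$ on a line through some $a\in A$ and $b\in B$ (which Condition \ref{C:Ray} supplies), it assumes a separating pair $P\subset L$ of dimensions $l-1$ and $l$ and sweeps rays angularly about each $a\in R^+\cap S$; the key computation (Lemma \ref{lem2}) shows that the swept cone $C_a$ with $\Int(C_a)\cap S\subset A$ must open to an angle of at least $\pi$, because otherwise two points $b',b''\in B$ on the bounding rays admit an integer combination $p=t'b'+t''b''$ lying on the positive ray, and the three collinear points $p\in A$, $q'=(t'+t'')b'\in B$, $q''=(t'+t'')b''\in B$ violate Condition \ref{C:Ray}. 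Disjointness of the $A$-cones and $B$-cones then pins the angle at exactly $\pi$, producing the next separating pair $P'\subset L'$. If you want to complete your own route, you need an argument of comparable force at the point where your sketch currently says ``should''; as written, the proposal identifies the right target but does not prove the theorem.
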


By Lemma \ref{lem:c1=>c3=>c2} it suffices to prove Condition \ref{C:Ray}
$\Rightarrow$ Condition \ref{C:Hyperplanes}. 
We give the proof 
in a series of lemmas and the following key proposition.

\begin{prop}
\label{lemma:inductionstep}
Let $l<n \leq d$ and $S=\ZZ^d \cap F$ 
for some affine subspace $F\subset \RR^d$ of dimension $n$.
Assume $A$ and $B$ satisfy  
Condition \romanref{C:Ray}.
If there  exist affine subspaces $P\subset L$ of dimension $(l-1)$ and $l$
such that $P$ separates $A$ and $B$ in $L$,
then there exist 
affine subspaces  $P'\subset L'$ of dimension $l$ and $(l+1)$  
such that $P'$ separates $A$ and $B$ in $L'$.
\end{prop}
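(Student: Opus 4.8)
The plan is to build $P'$ by \emph{rotating} the given hyperplane $L$ about the codimension-one axis $P$. First I would fix the ambient flat: choose a primitive integer vector $v$ whose direction is not in the direction space of $L$, pick a base point $o\in P$, and set $L'=\operatorname{aff}(L\cup\{o+v\})$, an $(l+1)$-dimensional flat containing $L$. I will look for $P'$ with $P\subset P'\subset L'$ and $\dim P'=l$; every such $P'$ is obtained by tilting $L$ around $P$ inside $L'$. To coordinatize this, let me introduce on $L'$ two affine coordinates transverse to $P$: a coordinate $u_1$ along $L$, chosen so that $R^+=\{u_1>0\}\cap L$ and $R^-=\{u_1<0\}\cap L$, and a coordinate $u_2$ along $v$, so that $P=\{u_1=u_2=0\}$ and $L=\{u_2=0\}$. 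A hyperplane of $L'$ through $P$ is then $P'_s=\{u_1+s\,u_2=0\}$ for a slope $s\in\RR$ (together with the limiting position $L=\{u_2=0\}$ itself). Writing $A'=A\cap L'$, $B'=B\cap L'$ and $c_s=u_1+s\,u_2$, the two open sides of $P'_s$ are $\{c_s>0\}$ and $\{c_s<0\}$, so $P'_s$ separates $A$ and $B$ in $L'$ if and only if
\[
c_s(a)\ge 0\ (\forall a\in A'), \qquad c_s(b)\le 0\ (\forall b\in B').
\]
Thus the whole problem reduces to producing a single real slope $s$ satisfying these inequalities.

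The given separation already disposes of the points of $L$: there $u_2=0$, so $c_s=u_1$, and $u_1\ge 0$ on $A\cap L\subset R^+\cup P$ while $u_1\le 0$ on $B\cap L\subset R^-\cup P$, for \emph{every} $s$. Only the points off $L$ constrain $s$, each contributing a one-sided bound, and a feasible $s$ exists exactly when the supremum of the lower bounds does not exceed the infimum of the upper bounds. I would phrase this dually: set
\[
C=\{\,w\in\RR^2 : w\cdot(u_1(a),u_2(a))\ge 0\ \forall a\in A',\ \ w\cdot(u_1(b),u_2(b))\le 0\ \forall b\in B'\,\}.
\]
A point of $A\cap R^+$ (or of $B\cap R^-$) has transverse image on the positive (resp.\ negative) $u_1$-axis, which forces $C\subset\{w_1\ge 0\}$. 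Consequently, as soon as $C\neq\{0\}$ I am done: a point of $C$ with $w_1>0$ rescales to $(1,s)$ and yields the separator $P'_s$, whereas if $C$ meets only the axis $\{w_1=0\}$ then $(0,\pm1)\in C$ says that all of $A'$ lies weakly on one side of $L$ and all of $B'$ weakly on the other, so $P'=L$ itself separates $A$ and $B$ in $L'$. (The degenerate case $S\cap L\subset P$, where the axis constraint is vacuous, is handled the same way by fixing the orientation directly.) Hence it remains only to rule out $C=\{0\}$.

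The hard part is exactly this: showing $C=\{0\}$ is impossible under Condition \ref{C:Ray}. Geometrically $C=\{0\}$ means that the transverse images of $A'$ and of $-B'$ positively span the plane, i.e.\ they interlock around the image of $P$; combined with the images of $A\cap R^+$ and $B\cap R^-$ being pinned to opposite rays of the $u_1$-axis, this produces four points --- two of $A$ and two of $B$ --- whose transverse images \emph{interleave} cyclically about the origin. I would turn such an interleaving into a violation of Condition \ref{C:Ray} by exhibiting a line $\ell\subset L'$ meeting both $A$ and $B$ on which the members of $A$ do not form a final segment of $\ell\cap S$, so that no point of $\ell$ separates $A$ and $B$ in $\ell$. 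The principal obstacle is that the natural witness of the interleaving is the crossing point of two chords, which is in general \emph{not} a lattice point; so the genuine work is to choose the auxiliary direction $v$ appropriately and to locate an honest lattice alternation of $A$ and $B$ along a single line realizing the interleaving. This lattice-realization step, invoking Condition \ref{C:Ray} on the relevant lines, is where I expect essentially all of the difficulty to lie; the remainder is the bookkeeping reduction carried out above.
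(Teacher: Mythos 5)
There are two genuine gaps here, and the first is fatal to the strategy as you have set it up. Your reduction considers only hyperplanes $P'$ of $L'$ that \emph{contain} $P$ (plus the limiting position $L$ itself), and you then claim that the only obstruction to feasibility is the degenerate cone $C=\{0\}$, which you propose to rule out via Condition \ref{C:Ray}. But $C=\{0\}$ can occur even when all hypotheses of the proposition hold: take $d=2$, $S=\ZZ^2$, $A=\{(x_1,x_2)\in\ZZ^2 : x_2\ge \varpi(x_1-10)\}$ for an irrational $\varpi>0$, $L$ the $x_1$-axis, and $P=\{(10.5,0)\}$ with $R^+=\{x_1<10.5\}\cap L$. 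Condition \ref{C:Ray} holds, $P$ separates $A$ and $B$ in $L$, and the line $x_2=\varpi(x_1-10)$ separates $A$ and $B$ in $\RR^2$ --- but no line \emph{through} $(10.5,0)$ does: the line of slope $\varpi$ through $(10.5,0)$ leaves lattice points of $B$ strictly above it (because $\ZZ+\varpi\ZZ$ is dense, there are lattice points with $-10.5\varpi<x_2-\varpi x_1<-10\varpi$), and every other direction already fails to separate even weakly. So your feasibility system is genuinely infeasible while the proposition is true, and the contradiction with Condition \ref{C:Ray} you hope to extract from ``$C=\{0\}$'' does not exist. The separating hyperplane must in general be allowed to translate off $P$. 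The paper's proof avoids this trap by pivoting not about $P$ but about lattice points: to each $a\in R^+\cap S$ it assigns a maximal closed angular cone $C_a$ of half-flats hinged at $a$, shows its angular width is exactly $\pi$ (so that $\Int(C_a)$ is an open half-space of $L'$), and then takes $R'_+=\bigcup_{a'\in L'\cap A}\bigl(\Int(C_a)-a+a'\bigr)$, whose boundary hyperplane is the desired --- generally translated --- $P'$.

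The second gap is the one you flag yourself: you never carry out the ``lattice-realization'' step converting an interleaving into a violation of Condition \ref{C:Ray}, and you correctly identify the obstacle that the crossing point of two chords need not be a lattice point. The paper's resolution is short but is the crux, and it is the only place where $S=\ZZ^d\cap F$ (rather than an arbitrary $S$) enters: if $b',b''\in B$ lie on two rays from a point $a\in A$ (normalized to $a=0$) spanning an angle less than $\pi$ around the forward half-flat, then the segment from $b'$ to $b''$ meets that half-flat in a \emph{rational} point $(t'b'+t''b'')/(t'+t'')$ with $t',t''$ positive integers; replacing $b',b''$ by $q'=(t'+t'')b'$ and $q''=(t'+t'')b''$ --- still in $B$, by Condition \ref{C:Ray} applied to the lines through $0,b'$ and $0,b''$ --- makes the crossing point $p=t'b'+t''b''$ an integer point of $A$ lying strictly between two points of $B$ on a single line, contradicting Condition \ref{C:Ray}. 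Without this scaling device (or a substitute) your outline does not close; and even with it, the argument would have to be grafted onto a parametrization that permits translated hyperplanes, as explained above.
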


For readability we shall prove this proposition later.
It implies the following two lemmas, which are sufficient 
to prove Theorem \ref{thm:whole-lattice}.

\begin{lem}
\label{lem:a hyperplane sepelates fullspace}
Let $S=\ZZ^d \cap F$ for some affine subspace $F\subset \RR^d$.
If $A$ and $B$ satisfy
Condition \romanref{C:Ray},
then there exists a hyperplane $H \subset F$ 
such that $H$ separates $A$ and $B$ in $F$.
\end{lem}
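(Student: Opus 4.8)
The plan is to induct on dimension, using Proposition \ref{lemma:inductionstep} as the inductive step and extracting the base case directly from Condition \ref{C:Ray}. Write $n=\dim F$. Since $A$ and $B$ are nonempty and disjoint, $S$ contains at least two points, so $n\ge 1$; in particular the degenerate case $n=0$ cannot occur.

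First I would establish the base case. Pick any $a\in A$ and $b\in B$, and let $L$ be the line through $a$ and $b$. Since $a,b\in F$ and $F$ is affine, $L\subset F$; moreover $a\in A\cap L$ and $b\in B\cap L$, so both $A\cap L$ and $B\cap L$ are nonempty. By Condition \ref{C:Ray}, in the equivalent form given in the remark following its definition, there exists $p\in L$ such that $P:=\{p\}$ separates $A$ and $B$ in $L$. This furnishes affine subspaces $P\subset L$ of dimensions $0$ and $1$ with $P$ separating $A$ and $B$ in $L$, which is exactly the hypothesis of Proposition \ref{lemma:inductionstep} with $l=1$. If $n=1$, then $L=F$ and $P$ is already the desired hyperplane, so we are done.

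Next I would run the induction, assuming $n\ge 2$. Starting from the pair $P\subset L$ of dimensions $(0,1)$ inside $F$, I apply Proposition \ref{lemma:inductionstep} repeatedly: from a separating pair of dimensions $(l-1,l)$ inside $F$ it produces a separating pair of dimensions $(l,l+1)$ inside $F$, which is legitimate as long as $l<n$. Iterating from $l=1$ up to $l=n-1$, the final application (valid because $n-1<n$) yields affine subspaces $P'\subset L'\subset F$ of dimensions $(n-1,n)$ such that $P'$ separates $A$ and $B$ in $L'$. Since $\dim L'=n=\dim F$ and $L'\subset F$, necessarily $L'=F$, so $H:=P'$ is a hyperplane in $F$ separating $A$ and $B$ in $F$, as required.

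The substance of the argument lies entirely in Proposition \ref{lemma:inductionstep}, which we take as given; the lemma itself is then a clean dimension induction. The only points needing care are the base case, where one must observe that the line through a point of $A$ and a point of $B$ automatically meets both sets and so is covered by Condition \ref{C:Ray}, and the terminal bookkeeping, namely that an $n$-dimensional affine subspace $L'$ contained in the $n$-dimensional $F$ can only be $F$ itself. I do not anticipate any genuine obstacle beyond verifying these dimension counts.
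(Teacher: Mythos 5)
Your proof is correct and takes essentially the same approach as the paper's: choose $a\in A$ and $b\in B$, apply Condition \romanref{C:Ray} to the line through them to obtain the base separating pair of dimensions $(0,1)$, and then apply Proposition \ref{lemma:inductionstep} repeatedly until the ambient subspace reaches $F$. You merely make the dimension bookkeeping explicit where the paper leaves it implicit.
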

\begin{proof}
Let $a\in A$ and $b \in B$.
For the line $L$ through $a$ and $b$,
some point $p \in L$ separates $A$ and $B$ in $L$
since $A$ and $B$ satisfy Condition \ref{C:Ray}.
Applying Proposition  \ref{lemma:inductionstep} inductively,
we have the lemma.
\end{proof}

\begin{lem}
\label{lemma:seq of hyper planes seterates fullspace}
Let $S=\ZZ^d \cap F$ for some affine subspace $F\subset \RR^d$ of dimension $n$.
If $A$ and $B$ satisfy Condition \romanref{C:Ray},
then Condition \romanref{C:Hyperplanes} holds.
\end{lem}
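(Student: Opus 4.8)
The plan is to induct on the dimension $n$ of $F$, peeling off one hyperplane at a time by repeated application of Lemma \ref{lem:a hyperplane sepelates fullspace}, and then to extend the resulting flag trivially up to $\RR^d$. Concretely, I would prove by induction on $n$ the slightly more general statement allowing $A$ or $B$ to be empty: for every affine $F$ of dimension $n$ with $S=\ZZ^d\cap F$ and $A\cup B=S$ satisfying Condition \ref{C:Ray}, there is a flag $H_k\subset\cdots\subset H_n=F$ (with $\dim H_i=i$) such that $H_k\cap S\subset A$ or $H_k\cap S\subset B$ and $H_{i-1}$ separates $A$ and $B$ in $H_i$ for $k+1\le i\le n$.

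First I would dispose of the degenerate cases, which is exactly what fixes the bottom index $k$. If one of $A,B$ is empty, then $S$ is contained in the other, so I take $k=n$ and $H_n=F$: condition (i) holds and the range $k+1\le i\le n$ of separation conditions is empty. The base case $n=0$ falls under this, since then $S$ has at most one point and is contained in $A$ or in $B$. Otherwise both $A$ and $B$ are nonempty, and I apply Lemma \ref{lem:a hyperplane sepelates fullspace} to obtain a hyperplane $H_{n-1}\subset F$ of dimension $n-1$ separating $A$ and $B$ in $F$; this supplies the top step $H_{n-1}\subset H_n=F$ of the flag.

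Next I would restrict to $H_{n-1}$. Set $S'=\ZZ^d\cap H_{n-1}$, $A'=A\cap H_{n-1}$, $B'=B\cap H_{n-1}$, so that $S'$ is again of the form $\ZZ^d\cap F'$ with $F'=H_{n-1}$ of dimension $n-1$. The crucial verification is that $A'$ and $B'$ inherit Condition \ref{C:Ray}: any line $L$ meeting both $A'$ and $B'$ lies inside $H_{n-1}$, whence $A'\cap L=A\cap L$, $B'\cap L=B\cap L$ and $S'\cap L=S\cap L$, so the ray $L'$ furnished by Condition \ref{C:Ray} for $A,B$ on $L$ (note $L'\subset L\subset H_{n-1}$ gives $L'\cap S'=L'\cap S$) serves verbatim for $A',B'$. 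The induction hypothesis applied to $H_{n-1}$ then yields a flag $H_k\subset\cdots\subset H_{n-1}$ with $H_k\cap S'\subset A'$ or $\subset B'$ and $H_{i-1}$ separating $A',B'$ in $H_i$ for $k+1\le i\le n-1$. Since every $H_i$ with $i\le n-1$ sits inside $H_{n-1}$, we have $S\cap H_i=S'\cap H_i$, $A\cap H_i=A'\cap H_i$ and $B\cap H_i=B'\cap H_i$; hence these separations and the terminal condition are literally statements about $A$ and $B$, and splicing in the top step produces a flag $H_k\subset\cdots\subset H_n=F$ separating $A,B$ inside $F$.

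Finally I would lift this flag to all of $\RR^d$: choose any chain $F=H_n\subset H_{n+1}\subset\cdots\subset H_d=\RR^d$ with $\dim H_i=i$. For $i>n$ all of $S$ lies in $F\subset H_{i-1}$, so $H_i\setminus H_{i-1}$ contains no point of $S$ and $H_{i-1}$ separates $A$ and $B$ in $H_i$ vacuously (both open half-spaces meet $S$ emptily). The concatenated flag $H_k\subset\cdots\subset H_d=\RR^d$ then witnesses Condition \ref{C:Hyperplanes}. I expect the main obstacle to be the bookkeeping in the inductive step: confirming that the restricted pair $A',B'$ keeps Condition \ref{C:Ray} and that separation within the lower strata computed with $S'$ coincides with separation computed with $S$. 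Once the intersections with $H_{n-1}$ are identified this is routine, and the degenerate cases are precisely what terminate the recursion and determine $k$.
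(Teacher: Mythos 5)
Your proof is correct and follows essentially the same route as the paper's: apply Lemma \ref{lem:a hyperplane sepelates fullspace}, restrict to the resulting hyperplane, and recurse until one of the two sets becomes empty in the current stratum. You supply details the paper leaves implicit (that Condition \romanref{C:Ray} is inherited by the restriction to $H_{n-1}$, and that the flag must be padded vacuously from $F$ up to $\RR^d$), but the underlying argument is identical.
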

\begin{proof}
By Lemma \ref{lem:a hyperplane sepelates fullspace},
there exists a hyperplane $H_{n-1}$ in $H_n=F$
such that $H_{n-1}$ separates $A$ and $B$ in $H_n$.
Applying Lemma \ref{lem:a hyperplane sepelates fullspace} recursively
until $H_i \subset A$ or $H_i \subset B$,
we have a sequence of hyperplanes which separate $A$ and $B$.
\end{proof}

\begin{proof}[Proof of Theorem \ref{thm:whole-lattice}]
Take $F=\RR^d$ in Lemma \ref{lemma:seq of hyper planes seterates fullspace}.
\end{proof}

\subsection{Proof of Proposition \ref{lemma:inductionstep}}
It remains to prove Proposition \ref{lemma:inductionstep}.
The rest of this section is devoted to its proof in a series of lemmas.
For $1 \le i   \le d$, let 
\begin{align*}
 R_i (\theta) &= \Set{ 
(x_1, \ldots, x_{i-1}, t \cos \theta, t \sin \theta, 0, \ldots, 0) 
\in \RR^d  | x_1,\ldots,x_{i-1} \in \RR, t >0 }, \\
 H_i  &= \Set{ (x_1, \dots, x_i, 0, \dots, 0) \in \RR^d  |  x_1,\ldots,x_i \in \RR }, 
\end{align*}
$R_i^{+} = R_i(0)$ and $R_i^{-} = R_i(\pi)$. 
Assume $\varphi \in \GL(\RR^d)$ and $\alpha \in \RR^d$
satisfy the following:
$P=\varphi (H_{l-1}) + \alpha $,
$L=\varphi (H_{l}) + \alpha $,
$F=\varphi (H_{n}) + \alpha $,
$R^+=\varphi (R^+_{l}) + \alpha $,
$R^-=\varphi (R^-_{l}) + \alpha $,
$R^+ \cap S \subset A$ and
$R^- \cap S \subset B$.

\begin{lem}\label{lem2}
Let $a \in R^+\cap S = (\varphi (R_l^+) + \alpha) \cap S$. 
There exist $\theta'_a, \theta''_a,$ and $C_a$ such that
\begin{align*}
\theta'_a - \theta''_a &\ge \pi, 
& C_a &= 
\bigcup_{\theta''_a \le \theta \le \theta'_a} (\varphi (R_l(\theta)) + a), 
& \Int(C_a) \cap S &\subset A,
\end{align*}
where   
$\Int(X)$ denotes the relative interior of $X \subset \RR^d$.
\end{lem}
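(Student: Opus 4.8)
The plan is to reduce the statement to a purely planar, order-theoretic fact about Condition \ref{C:Ray} and then to carry out a lattice descent. First I would record the only two properties of the hypotheses that the argument uses. Since Condition \ref{C:Ray} and the conclusion are invariant under the affine bijection $x\mapsto \varphi(x)+\alpha$, and since this bijection carries $\ZZ^d$ to a full lattice while preserving the fact that \emph{every line lying in the ambient flat contains all of its lattice points inside $S$}, I may argue in the standard coordinates in which $P=H_{l-1}$, $L=H_l$, $F=H_n$, $R^+=R_l(0)$, $R^-=R_l(\pi)$. The two workhorses are: (i) the \emph{no-sandwich property}: for any line $\ell\subset F$ that meets both $A$ and $B$, $A\cap\ell$ is the trace of a ray, so three collinear points of $S$ can never occur in the order $B,A,B$; and (ii) \emph{line-richness}: for $s,s'\in S$ the whole line through them lies in $F$, hence all of its lattice points lie in $S$.

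Next I would dispose of the direction $\theta=0$. Writing $a=\varphi(r)+\alpha$ with $r\in R_l^+$, a direct computation gives $\varphi(R_l(0))+a\subset R^+$, so $(\varphi(R_l(0))+a)\cap S\subset R^+\cap S\subset A$; thus $\theta=0$ is ``good''. All half--hyperplanes $\varphi(R_l(\theta))+a$ share the common boundary $P_a=\varphi(H_{l-1})+a$ and lie in the $(l+1)$--flat $M=\varphi(H_{l+1})+\alpha$; projecting $M$ modulo $P_a$ onto the plane spanned by $\varphi e_l,\varphi e_{l+1}$ assigns to each point an angle $\theta$, and I call $\theta$ \emph{bad} if $(\varphi(R_l(\theta))+a)\cap S$ meets $B$. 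Since the $S$--points on the axis $P_a$ itself lie in $R^+\subset A$, the conclusion $\Int(C_a)\cap S\subset A$ reduces exactly to finding a closed arc of length $\ge\pi$ whose interior contains no bad angle; that in turn is equivalent to the assertion that \textbf{the bad angles are contained in a closed half--circle}, i.e.\ the directions from $a$ to the $B$--points of $M$ do not positively span the angular plane.

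To prove this I would first isolate the \emph{reflection tool}: if $\theta_0$ is bad, witnessed by $b\in B$, then by line-richness the whole line through $a$ and $b$ lies in $S$, and by the no-sandwich property the lattice points of this line on the side of $a$ opposite $b$ all lie in $A$; in particular the antipodal direction $\theta_0+\pi$ carries genuine lattice points of $A$. Arguing by contradiction, suppose the bad angles are not contained in a half--circle. Then the directions to $B$--points positively span, so (reducing the $(l-1)$--dimensional axis away by applying the no-sandwich property only along honest lines of $M$, and invoking Carath\'eodory in the angular plane) there are $b_1,b_2,b_3\in B$ realizing a positively spanning triple, i.e.\ with $a$ angularly surrounded, which in the essentially planar situation means $a\in\Int\conv\{b_1,b_2,b_3\}$.

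Finally I would eliminate such a configuration by a minimal-area descent. Among all triangles with vertices in $B$ whose interior contains a point of $A$, pick one, $T=\conv\{b_1,b_2,b_3\}$, of least area. The no-sandwich property forces every lattice point in the relative interior of an edge to lie in $B$, and forces every \emph{interior} lattice point of $T$ lying in $B$ to split $T$ (by re-triangulation through that point) into strictly smaller $B$--triangles one of which again has an interior $A$--point, contradicting minimality; hence the minimal $T$ has primitive edges and all its interior lattice points in $A$. \emph{The main obstacle is precisely this residual ``empty-edge'' case, which area alone cannot reduce because the points where the relevant lines cross the sides of $T$ need not be lattice points.} I expect to clear it with the reflection tool: reflecting the vertices through the interior $A$--point produces honest lattice points of $A$ in the antipodal directions, and propagating the colouring forced by the no-sandwich property along two lines joining these reflected points to the original $B$--vertices pins down a single \emph{exterior} lattice point that is simultaneously required to be in $A$ and in $B$ (exactly the phenomenon visible in the smallest case $b_1=(0,0),b_2=(2,1),b_3=(1,2),a=(1,1)$). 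This contradiction shows the bad angles lie in a half--circle, and taking $[\theta''_a,\theta'_a]$ to be the complementary closed arc yields the wedge $C_a$ with $\theta'_a-\theta''_a\ge\pi$ and $\Int(C_a)\cap S\subset A$.
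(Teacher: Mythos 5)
Your reduction of the lemma to the assertion that the ``bad'' angles lie in a closed half-circle is sound and agrees with the paper's framing, and your no-sandwich and reflection observations are correct consequences of Condition \ref{C:Ray}. However, the proof is not complete. The step you yourself flag as ``the main obstacle'' is genuinely unproved: after the area descent you are left with a lattice triangle with vertices in $B$, primitive edges, and at least one interior lattice point, all of whose interior lattice points lie in $A$, and this configuration must still be excluded. You verify only the single instance $b_1=(0,0)$, $b_2=(2,1)$, $b_3=(1,2)$, $a=(1,1)$, and there the argument works precisely because $a$ is the centroid: $b_1$, $2a-b_2$ and $2b_3-b_2$ are then collinear, so Condition \ref{C:Ray} propagates $A$ out to $2b_3-b_2$ and to $2b_2-b_3$, which sandwich $b_2,b_3$ on the line through them. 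A general minimal triangle can have several interior lattice points, none of them the centroid (which need not even be a lattice point), and the collinearities your propagation relies on disappear; no argument is offered for that case. A second, independent gap is the planar reduction for $l\ge 2$: Carath\'eodory applied to the angular images of the $B$-points (in the quotient of the $(l+1)$-flat by the $(l-1)$-dimensional axis) yields three points $b_1,b_2,b_3$ whose triangle contains some point of the translated axis $\varphi(H_{l-1})+a$, not the lattice point $a$ itself; that point need not be a lattice point, so the descent, which needs an honest interior lattice point of $A$ to chase, never gets started, and the quotient is no substitute since the projection of $\ZZ^d$ along an irrational axis is not a lattice. A minor further issue is that your final complementary arc need not contain the direction $\theta=0$, which the construction of $R'_+$ after this lemma implicitly uses.

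The paper's own argument is much shorter and needs only two witnesses rather than three. It defines $\theta'_a$ (resp.\ $\theta''_a$) as the supremum (infimum) of angles reachable from $0$ through good directions, so that $\theta''_a\le 0\le\theta'_a$ and $\Int(C_a)\cap S\subset A$ holds by construction; if $\theta'_a-\theta''_a<\pi$ there are bad angles $\theta'\in(0,\pi)$ and $\theta''\in(-\pi,0)$ with $\theta'-\theta''<\pi$, witnessed by $b'\in B$ and $b''\in B$. Choosing positive integers $t',t''$ so that $p=t'b'+t''b''$ lies on a good ray from $a$ forces $p\in A$, while $q'=(t'+t'')b'$ and $q''=(t'+t'')b''$ lie in $B$ because they sit farther out on the rays from $a$ through $b'$ and $b''$, which Condition \ref{C:Ray} colours $B$ beyond the first $B$-point; since $p=(t'q'+t''q'')/(t'+t'')$ lies between $q'$ and $q''$, this is the forbidden $B$--$A$--$B$ pattern. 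If you want to salvage your write-up, the most economical fix is to abandon the three-point configuration and the descent entirely and argue directly from two bad directions straddling $\theta=0$ with angular gap less than $\pi$.
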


\begin{proof}
Define $\theta'_a, \theta''_a$ as
\begin{align*}
	\theta'_a &= \sup \Set{ \theta'  | \forall \theta \in [0,\theta'], \ (\varphi (R_l(\theta)) + a) \cap S \subset A }, \\
	\theta''_a &= \inf \Set{ \theta''  |  \forall \theta \in [\theta'',0], \ (\varphi (R_l(\theta)) + a) \cap S \subset A }.
\end{align*}
Since $(\varphi (R_l^{+}) + a) \subset (\varphi (R_l^+) +\alpha) \subset A$
 and $\emptyset \not= (\varphi (R_l^-) + \alpha) \subset B$, 
it follows that $-\pi \le \theta''_a \le 0 \le \theta'_a \le \pi$.
Let $C_a = \bigcup_{\theta''_a \le \theta \le \theta'_a} (\varphi (R_l(\theta)) + a)$. Then we have $\Int(C_a) \cap S \subset A$.

The proof is completed by showing that $\theta'_a - \theta''_a \ge \pi$. 
For convenience, let $a = 0$. 
If $\theta'_a - \theta''_a < \pi$, then
there exist $\theta',\theta''$ such that
\begin{gather*}
 \theta'' \le \theta''_a \le \theta'_a \le \theta' < \theta'' + \pi, \\
 \varphi (R_l(\theta'')) \cap B \not= \emptyset,  \\
 \varphi (R_l(\theta')) \cap B \neq \emptyset.
\end{gather*}
Let $b' \in \varphi (R_l(\theta')) \cap B, b'' \in \varphi (R_l(\theta'')) \cap B$. Since $b', b'' \in \ZZ^d$, there exist $t',t'' \in \ZZ$ such that
\begin{align*} 
 \frac{t'b' + t''b''}{t'+t''} \in \varphi (R_l^+). 
\end{align*}
Let $q' = (t'+t'')b', q'' = (t'+t'') b'',$ and 
$p = (t'q' + t''q'')/(t'+t'') = t'b' + t''b''$. 
Then $q', q'' \in B$, and $p \in (\varphi (R_l^+) + a) \cap \ZZ^d \subset A$.
However the three points $p,q',q''$ contradict  Condition \ref{C:Ray}.
\end{proof}

\begin{lem}
\label{lem3}
Let $b \in R^- \cap S = (\varphi (R_l^-) + \alpha) \cap S$. There exist $\theta'_b, \theta''_b,$ and $C_b$ such that
\begin{align*} 
\theta'_b - \theta''_b \ge \pi, \quad 
C_b = \bigcup_{\theta''_b \le \theta \le \theta'_b} (\varphi (R_l(\theta)) + b), \quad
\Int(C_b) \cap \ZZ^d \subset B.
\end{align*}
\end{lem}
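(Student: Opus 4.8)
The plan is to obtain Lemma \ref{lem3} as the mirror image of Lemma \ref{lem2}, exploiting the symmetry that interchanges $A$ and $B$. The first point I would record is that Condition \ref{C:Ray} is symmetric in $A$ and $B$: by the remark following its definition it is equivalent to the existence, on every line $L$ meeting both $A$ and $B$, of a point $p$ with $\{p\}$ separating $A$ and $B$ in $L$, and such separation is unaffected when the two half-lines $R^+$ and $R^-$ are swapped. Hence the entire hypothesis of Proposition \ref{lemma:inductionstep} is invariant under exchanging the names of $A$ and $B$.

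Next I would realize this symmetry geometrically. Let $\rho \in \GL(\RR^d)$ be the rotation by $\pi$ in the $(x_l, x_{l+1})$-plane fixing all other coordinates, and put $\tilde\varphi = \varphi \circ \rho$. Since $l < n$ forces $l+1 \le n$, a direct check gives $\rho(H_{l-1}) = H_{l-1}$, $\rho(H_l) = H_l$, $\rho(H_n) = H_n$, while $\rho(R_l(\theta)) = R_l(\theta + \pi)$; in particular $\tilde\varphi(R_l^+) + \alpha = R^-$ and $\tilde\varphi(R_l^-) + \alpha = R^+$. Thus $\tilde\varphi$ and $\alpha$ satisfy exactly the standing assumptions preceding Lemma \ref{lem2}, but with the roles of $A$ and $B$ (and of $R^+$ and $R^-$) interchanged, and with $R^- \cap S \subset B$, $R^+ \cap S \subset A$ now playing the two inclusions.

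I would then apply Lemma \ref{lem2} to this swapped configuration, with $b \in R^- \cap S = (\tilde\varphi(R_l^+) + \alpha) \cap S$ in the role of $a$. This produces angles $\tilde\theta'_b, \tilde\theta''_b$ with $\tilde\theta'_b - \tilde\theta''_b \ge \pi$ and a cone $\tilde C_b = \bigcup_{\tilde\theta''_b \le \theta \le \tilde\theta'_b}(\tilde\varphi(R_l(\theta)) + b)$ whose relative interior meets $S$ only in $B$. Using $\tilde\varphi(R_l(\theta)) = \varphi(R_l(\theta + \pi))$ and setting $\theta'_b = \tilde\theta'_b + \pi$, $\theta''_b = \tilde\theta''_b + \pi$, this cone rewrites as $C_b = \bigcup_{\theta''_b \le \theta \le \theta'_b}(\varphi(R_l(\theta)) + b)$ with $\theta'_b - \theta''_b \ge \pi$, which is the required shape.

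Finally I would reconcile the two ways of writing the conclusion. Since $b \in S \subset F$ and $R_l(\theta) \subset H_{l+1} \subset H_n$, each translate $\varphi(R_l(\theta)) + b$ lies in $F$, so $C_b \subset F$ and therefore $\Int(C_b) \cap \ZZ^d = \Int(C_b) \cap (F \cap \ZZ^d) = \Int(C_b) \cap S \subset B$, which is the stated inclusion. The one thing to get right is the bookkeeping in the second paragraph: checking that $\rho$ genuinely fixes $H_{l-1}, H_l, H_n$ and shifts the ray parameter by exactly $\pi$, so that Lemma \ref{lem2} transfers verbatim. Alternatively one can bypass $\rho$ and repeat the proof of Lemma \ref{lem2} line for line, defining $\theta'_b, \theta''_b$ as the supremum and infimum of angles around $\pi$ keeping $(\varphi(R_l(\theta)) + b) \cap S \subset B$, using $(\varphi(R_l^-) + b) \subset R^- \subset B$ for the base case and the nonemptiness of $R^+ \cap S \subset A$ to bound the cone, and deriving the same contradiction with Condition \ref{C:Ray} from a positive integer combination of two $A$-points that lands on $\varphi(R_l^-)$ inside $B$.
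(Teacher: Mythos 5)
Your proposal is correct and matches the paper, whose entire proof of this lemma is the single sentence that it is ``the same as that of Lemma \ref{lem2}''; you have simply made the implicit $A\leftrightarrow B$ symmetry explicit via the rotation $\rho$ by $\pi$ in the $(x_l,x_{l+1})$-plane, verified that it preserves the standing setup, and correctly reconciled the cosmetic discrepancy between $\Int(C_b)\cap\ZZ^d$ and $\Int(C_b)\cap S$ using $C_b\subset F$. No gaps.
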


\begin{proof}
The proof is the same as that of Lemma \ref{lem2}.
\end{proof}

Since $\Int(C_a) \cap S \subset A, \Int(C_b) \cap S \subset B$ and $A \cap B = \emptyset$, it immediately follows that 
$\theta'_a - \theta''_a = \pi$. 
Since 
\begin{align*} 
 C_a = \bigcup_{\theta''_a \le \theta \le \theta'_a} (\varphi (R_l(\theta)) + a) 
 = \varphi \left(\bigcup_{\theta''_a \le \theta \le \theta'_a} R_l(\theta)\right)
 + a,
\end{align*}
there exists $\psi_a \in \GL(\RR^d)$ such that 
$\psi_a (R_{l+1}^+) + a = \Int(C_a)$. 

Let us fix $a \in R^+\cap S$ and $\psi=\psi_a$.
Let $L'= \psi(H_{l+1})+a$ and $C'_{a'}=C_a-a+a' \subset L'$ 
for $a' \in L'\cap A$.
\begin{lem}\label{lem:2'}
For $a' \in L'\cap A$,
$\Int(C'_{a'}) \cap S \subset A.$
\end{lem}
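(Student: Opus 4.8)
The plan is to argue by contradiction, exploiting the fact recorded just before the statement that $\Int(C_a)=\psi(R_{l+1}^+)+a$ is an \emph{open half-space} of the $(l+1)$-dimensional affine subspace $L'=\psi(H_{l+1})+a$. Since $R_{l+1}^+$ is the open half-space $\{x_{l+1}>0\}$ of $H_{l+1}$ and $\psi\in\GL(\RR^d)$, I would first fix a linear functional $\nu$ on $\RR^d$ with
\[
\Int(C_a)=\Set{x\in L' | \nu(x)>\nu(a)}.
\]
Because $C'_{a'}=C_a-a+a'$ is a parallel translate, this immediately gives $\Int(C'_{a'})=\Set{x\in L' | \nu(x)>\nu(a')}$. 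I would also record two facts used throughout: $L'\subset F$ (being the affine hull of $C_a\subset F$), so every integer point of every line contained in $L'$ already lies in $S=\ZZ^d\cap F$; and $\Int(C_a)\cap S\subset A$ by Lemma~\ref{lem2}. The goal then reduces to showing that no $c\in\Int(C'_{a'})\cap S$ can belong to $B$, and the argument below treats $a'$ on either side of the bounding hyperplane uniformly, so no case split is needed.

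So suppose for contradiction that some $c\in\Int(C'_{a'})\cap S$ lies in $B$, and set $u=c-a'\in\ZZ^d$. By the description of $\Int(C'_{a'})$ we have $\nu(c)>\nu(a')$, i.e.\ $\nu(u)>0$, so $\nu$ is strictly increasing as one moves from $a'$ in the direction $u$. Let $M$ be the line through $a'$ and $c$. Since $a'\in A$ and $c\in B$ both lie on $M$, Condition~\ref{C:Ray} applies to $M$ and yields $A\cap M=M'\cap S$ for some ray $M'\subset M$. As $a'\in A\cap M\subset M'$ while $c\in S\setminus A$ forces $c\notin M'$, the ray $M'$ must open in the $-u$ direction; hence every lattice point $a'+ku$ with $k$ a sufficiently large positive integer lies in $M\setminus M'$, and therefore in $B$.

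The contradiction then comes from pushing such a point far enough in the $+u$ direction: since $\nu(a'+ku)=\nu(a')+k\,\nu(u)$ with $\nu(u)>0$, for all large $k$ we have $\nu(a'+ku)>\nu(a)$, so $a'+ku\in\Int(C_a)$; and as $a'+ku\in M\subset L'\subset F$ it lies in $S$, whence Lemma~\ref{lem2} gives $a'+ku\in\Int(C_a)\cap S\subset A$, contradicting $a'+ku\in B$ and $A\cap B=\emptyset$. This proves $c\in A$, i.e.\ $\Int(C'_{a'})\cap S\subset A$. The step I expect to require the most care is the very first one: justifying cleanly that $\Int(C_a)$ is an open half-space of $L'$ (this rests on the reduction $\theta'_a-\theta''_a=\pi$) so that the functional $\nu$ exists, and then verifying that the far-out points $a'+ku$ genuinely lie in $S$ — which is precisely where the containment $L'\subset F$ is needed so that both Condition~\ref{C:Ray} and Lemma~\ref{lem2} may be applied to them.
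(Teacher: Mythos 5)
Your proof is correct and follows essentially the same route as the paper's: identify $\Int(C_a)$ and its translate $\Int(C'_{a'})$ as parallel open half-spaces of $L'$, and apply Condition \ref{C:Ray} to the line through $a'$ and a putative point of $B$, using that far-out integer points on that line eventually land in $\Int(C_a) \cap S \subset A$ by Lemma \ref{lem2}. The only difference is cosmetic: the paper splits into the cases $a' \in C_a$ and $a' \notin C_a$, while your functional $\nu$ treats both uniformly and makes explicit some details (e.g.\ $L' \subset F$, so the far-out lattice points really lie in $S$) that the paper leaves implicit.
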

\begin{proof}
By Lemma \ref{lem2},  $\Int(C_{a}) \subset A$.
If $a' \in C_{a}$, then $C'_{a'}\subset C_{a}$, which implies 
$ \Int(C'_{a'}) \subset \Int (C_{a}) \subset A$.
Let us consider the case when $a'\not\in C_{a}$.
For each $x\in \Int(C'_{a'}) \cap S$, there exists some point
 $q \in C_{a}\cap S \subset A$
on the ray from $a$ to $x$.
Since we assume Condition \ref{C:Ray}, $x$ is in $A$.
\end{proof}

Let 
\begin{align*}
R'_+ &= 
\bigcup_{a' \in L'\cap A} \Int(C'_{a'}) = 
\bigcup_{a' \in L'\cap A} \Int(C_{a}) -a +a' \\
&= \bigcup_{a' \in L'\cap A} (\psi (R_{l+1}^+) + a').
\end{align*}
Fix $\beta \in \partial R'_+ = \overline{R'_+} \setminus \Int(R'_+)$. 
Then we obtain 
\begin{align*}
	R'_+ &= \psi (R_{l+1}^+) + \beta, &
L' \setminus \overline{R'_+} &=
	\psi (H_{l+1}) \setminus \overline{R'_+} = \psi_a (R_{l+1}^-) + \beta.
\end{align*}
Let $R'_-=  \psi_a (R_{l+1}^-) + \beta$ and
$P'=L'\setminus ( R'_+ \cup  R'_-) $.
Then we turn to show that $P'$ separates $A$ and $B$ in $L'$.
\begin{lem}
\label{lem4}
$\emptyset \not= R'_- \cap S \subset B$.
\end{lem}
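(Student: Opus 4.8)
The plan is to establish the two claims in Lemma~\ref{lem4} separately: first that $R'_- \cap S \subset B$, and then that $R'_- \cap S \neq \emptyset$. For the inclusion, I would argue by contradiction using the symmetry between $a$-cones and $b$-cones already developed. Suppose some $x \in R'_- \cap S$ actually lies in $A$. Since $R'_-= \psi_a(R_{l+1}^-)+\beta$ is the complementary open half of $L'$ to $R'_+$, and since $x\in A$, one should be able to grow a cone $C'_x$ (the translate $C_a - a + x$ living in $L'$) whose interior meets $S$ only in $A$ by the same ray-tracing argument as in Lemma~\ref{lem:2'}. The point is that such an $A$-cone based at a point on the ``$B$-side'' of $P'$ would force its interior to overlap $\Int(C_b)\cap S\subset B$ for some $b\in R^-\cap S$, contradicting $A\cap B=\emptyset$. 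Concretely, I expect to invoke the half-plane phenomenon: because $\theta'_a-\theta''_a=\pi$ exactly, the cone $C_a$ is a genuine half-space within $L'$, so $R'_+$ and $R'_-$ partition $L'\setminus P'$ into two opposite open half-spaces; an $A$-point in $R'_-$ would let one sweep an $A$-half-space pointing back into $R'_+$, colliding with the $B$-points that must exist there.

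For the nonemptiness $R'_- \cap S \neq \emptyset$, the natural route is to exhibit an explicit member of $B$. I would start from the point $b\in R^-\cap S=(\varphi(R_l^-)+\alpha)\cap S$, which is nonempty by hypothesis, and from the associated cone $C_b$ of Lemma~\ref{lem3} satisfying $\theta'_b-\theta''_b\ge \pi$ and $\Int(C_b)\cap \ZZ^d\subset B$. By the same argument that collapsed $\theta'_a-\theta''_a$ to exactly $\pi$, the $B$-cone $C_b$ is also a half-space in its ambient $(l+1)$-dimensional flat, sitting on the opposite side of $P'$ from $R'_+$. I would then show that this half-space, intersected with $S$, is nonempty and lands inside $R'_-$, so that $R'_-\cap S$ inherits a point of $B$. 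The bookkeeping here is to check that the flat spanned by $C_b$ coincides with $L'=\psi(H_{l+1})+a$ and that the boundary hyperplane lines up with $P'$ after the translation by $\beta$.

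The main obstacle I anticipate is the geometric alignment: verifying that the cone $C_b$ built at a $B$-point lies in the \emph{same} affine $(l+1)$-flat $L'$ that was defined via $\psi_a$ and $a$, and that its supporting hyperplane matches $P'$. The construction fixed $\psi=\psi_a$ and $L'$ using a single chosen $a\in R^+\cap S$, so there is genuine content in showing the $B$-side structure is the mirror image. I would handle this by noting that $\Int(C_a)\cap S\subset A$ and $\Int(C_b)\cap S\subset B$ together with $A\cap B=\emptyset$ force the two half-space cones to share a common boundary direction (this is exactly what made $\theta'_a-\theta''_a=\pi$), so $C_a$ and $C_b$ are, up to translation, complementary half-spaces of one and the same $(l+1)$-dimensional flat. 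Once that common flat is identified with $L'$, both claims follow: the boundary is $P'$, the $A$-side is $R'_+$, and the $B$-side is $R'_-$, which therefore meets $S$ precisely in the nonempty set $\Int(C_b)\cap S\subset B$. The delicate step is making the ``same flat'' assertion rigorous rather than merely plausible from the pictures, and I would lean on Condition~\ref{C:Ray} applied along lines joining $A$-points to $B$-points to pin down the direction of the shared hyperplane.
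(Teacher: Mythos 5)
Your proposal goes a much longer way around than the paper does, and the detour introduces gaps. For the inclusion $R'_-\cap S\subset B$ the paper's argument is a direct consequence of the \emph{definition} of $R'_+$: since $R'_+=\bigcup_{a'\in L'\cap A}\Int(C'_{a'})$ and each $a'$ lies in the closure of its own cone $C'_{a'}$, one has $A\cap L'\subset\overline{R'_+}$, hence $R'_-=L'\setminus\overline{R'_+}$ contains no point of $A$ and so meets $S$ only in $B$. No contradiction argument is needed. Your contradiction mechanism, by contrast, is not substantiated as stated: you say an $A$-cone $\Int(C'_x)$ based at $x\in R'_-$ would ``overlap $\Int(C_b)\cap S$'' or ``collide with the $B$-points that must exist'' in $R'_+$. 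But $R'_+\cap S\subset A$ contains no $B$-points at all, and a set-theoretic overlap of two open cones is only a contradiction if the overlap contains an actual point of $S$ lying in both $A$ and $B$ --- you never produce such a point, and producing one requires a lattice-point-in-a-wedge argument that is nowhere in your sketch. (The fix is easy but collapses your argument back into the paper's: if $x\in A\cap R'_-$ then $\Int(C'_x)\subset R'_+$ by the definition of $R'_+$, while $x$ strictly on the $R'_-$ side forces $\Int(C'_x)\supsetneq R'_+$, a purely set-theoretic contradiction.)

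For nonemptiness your plan is to show that the $B$-cone $C_b$ spans the same flat $L'$ and that its supporting hyperplane ``lines up with $P'$.'' Identifying the flat is fine ($a$ and $b$ both lie in $L$, so $\varphi(H_{l+1})+a=\varphi(H_{l+1})+b$), but the alignment of $\partial C_b$ with $P'$ is exactly the hard content, and you explicitly leave it as the ``delicate step'' without an argument; proving it needs the same kind of reasoning (non-parallel boundaries would create an open wedge that must contain a point of $S$ belonging to both $A$ and $B$) that you also left unproved in the first part. The paper itself treats nonemptiness as immediate and gives no explicit argument, so you are right that something should be said here; but the economical route is simply that $R'_-$ is a nonempty open half-space of $L'$, that any $b\in R^-\cap S\subset B$ cannot lie in $R'_+$ (as $R'_+\cap S\subset A$), and that not all of the $B$-points of $L'\cap S$ can sit on the $l$-dimensional subspace $P'$ --- rather than rebuilding the entire $B$-side cone structure.
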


\begin{proof}
Since $A\cap L' \subset \overline{R'_+}$, 
it follows that $S\cap \left(L' \setminus \overline{R'_+} \right)\subset B$, which implies
$S \cap R'_- \subset B$.
\end{proof}

\begin{lem}\label{lem5}
$\emptyset \neq R'_+ \cap S \subset A$.
\end{lem}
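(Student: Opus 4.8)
The plan is to show the two assertions separately. The nonemptiness is immediate: by construction $R'_+ = \psi(R_{l+1}^+) + \beta$ is the relative interior of a nonempty cone (it contains $\Int(C_a)$ up to translation, and $\Int(C_a)\cap S\subset A$ was already shown to be a genuine half-space-like region containing lattice points since $a \in R^+\cap S$), so $R'_+ \cap S$ is nonempty. The real content is the inclusion $R'_+ \cap S \subset A$, and here I would argue as follows.

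First I would unpack the definition $R'_+ = \bigcup_{a' \in L'\cap A} \Int(C'_{a'})$ that precedes the lemma. By Lemma \ref{lem:2'}, each $\Int(C'_{a'})\cap S \subset A$ for $a' \in L'\cap A$. Hence the union over all such $a'$ also meets $S$ only in $A$. The subtlety is that $R'_+$ was then re-expressed as the single translated cone $\psi(R_{l+1}^+)+\beta$, where $\beta$ is a chosen boundary point of $R'_+$; so I must check that passing from the union description to this clean cone description does not enlarge the set in a way that captures points of $B$. Since $R'_+ = \psi(R_{l+1}^+)+\beta$ is exactly the union (this is the displayed identity defining $R'_+$ just before fixing $\beta$), every point $x \in R'_+\cap S$ lies in $\Int(C'_{a'})$ for some $a' \in L'\cap A$, and then $x\in A$ by Lemma \ref{lem:2'}.

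The step I expect to be the main obstacle is verifying that $R'_+$ is genuinely equal to the translated cone $\psi(R_{l+1}^+)+\beta$ rather than merely contained in it — i.e., that the union of the parallel half-spaces $\psi(R_{l+1}^+)+a'$ over $a'\in L'\cap A$ fills out precisely one open half of $L'$ determined by the boundary point $\beta$. This relies on the fact (to be used from the cone geometry set up for Lemmas \ref{lem2} and \ref{lem3}) that all the $\Int(C'_{a'})$ are translates of a common half-space $\psi(R_{l+1}^+)$ in the same direction, so their union is again such a half-space, and on choosing $\beta \in \partial R'_+$ so that this half-space has $\beta$ on its bounding hyperplane. Once that geometric identity is in hand, the inclusion into $A$ is the routine combination of the union formula with Lemma \ref{lem:2'}, exactly mirroring how Lemma \ref{lem4} handled $R'_-$.
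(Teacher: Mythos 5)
Your proof is correct and takes essentially the same approach as the paper: the paper's own argument is the one-liner that every $x \in R'_+$ lies in $\Int(C'_{a'})$ for some $a' \in L'\cap A$ and hence belongs to $A$ by Lemma \ref{lem:2'}. Your extra care about the identity $R'_+=\psi(R_{l+1}^+)+\beta$ and about nonemptiness only makes explicit what the paper leaves implicit.
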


\begin{proof}
For $x  \in R'_+ $, there exists  $a' \in L'$
such that $x \in \Int(C'_{a'})$, which implies $x \in A$.
\end{proof}

Now we have Proposition \ref{lemma:inductionstep}.  Note that
in the above proofs we described how to construct a sequence of separating
affine subspaces.  

\section{Separation of a finite set in dimension two}
\label{sec:finite}

In this section, we  consider the case when $S \subset \ZZ^d $ is finite. 
In the case $d=2$ simple separation results hold as shown in
the following theorems. 

\begin{thm}
\label{thm:finite}
Let $S \subset \ZZ^2$ be a finite integrally convex set. Then for $A \subset S$ and $B = S \setminus A$,  
Conditions \romanref{C:Hyperplanes} and \romanref{C:Parallelogram} are equivalent.
\end{thm}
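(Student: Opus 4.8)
Since Lemma \ref{lem:hyperplanes=>parallelogram} already yields Condition \ref{C:Hyperplanes} $\Rightarrow$ Condition \ref{C:Parallelogram} for an arbitrary $S$, the content of the theorem is the reverse implication, and my plan is to route it through Condition \ref{C:Ray}. An integrally convex set is hole free, and a hole free set $S=\conv(S)\cap\ZZ^2$ is $1$-convex; hence Lemma \ref{lem:parallelogram=>ray} applies and Condition \ref{C:Parallelogram} implies Condition \ref{C:Ray}. It therefore suffices to prove that, for a finite integrally convex $S\subset\ZZ^2$, Condition \ref{C:Ray} implies Condition \ref{C:Hyperplanes}. This is the finite two-dimensional analogue of Theorem \ref{thm:whole-lattice}, but the rotation-of-rays argument behind Proposition \ref{lemma:inductionstep} cannot be reused verbatim: it manufactured the auxiliary points $q',q''$ by integer scaling, which leaves a bounded $S$, so in the present setting that mechanism must be replaced by integral convexity.

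The reduction I would carry out is to produce a single line $H_1\subset\RR^2$ whose two open half-planes $R^+,R^-$ satisfy $R^+\cap S\subset A$ and $R^-\cap S\subset B$; equivalently, a line that weakly separates $\conv(A)$ from $\conv(B)$. Granting such an $H_1$, Condition \ref{C:Hyperplanes} follows at once: if $H_1\cap S$ is monochromatic we take $H=\{H_1,\RR^2\}$, while if $H_1$ meets both $A$ and $B$ then Condition \ref{C:Ray}, applied to the line $H_1$, supplies a point $H_0$ separating $A$ and $B$ within $H_1$, and $H=\{H_0,H_1,\RR^2\}$ works. Because two full-dimensional compact convex polygons in the plane admit a weakly separating line exactly when their interiors are disjoint, the whole problem collapses to the claim $\Int\conv(A)\cap\Int\conv(B)=\emptyset$, the degenerate cases in which $\conv(A)$ or $\conv(B)$ is lower dimensional being easy to dispose of separately.

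The heart of the matter, and the step I expect to be the main obstacle, is deducing this disjointness from Condition \ref{C:Parallelogram} (or \ref{C:Ray}) together with integral convexity in the plane. I would argue by contradiction: a common interior point lies, by Carath\'eodory's theorem, in the interiors of a triangle spanned by three points of $A$ and of a triangle spanned by three points of $B$, so some edge $[a,a']$ with $a,a'\in A$ must cross some edge $[b,b']$ with $b,b'\in B$ (the case where one hull contains the other is handled similarly, by picking a vertex of the inner hull interior to the outer one). Condition \ref{C:Ray} then forces every lattice point of $S$ on $[a,a']$ to lie in $A$ and every one on $[b,b']$ to lie in $B$, since a point of the opposite colour between two like-coloured points would break the ray structure. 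What remains is to convert this real crossing of two fully coloured segments into an exact integral coincidence, namely a parallelogram $a_1+a_2=b_1+b_2$ contradicting Condition \ref{C:Parallelogram}, or three collinear lattice points coloured $A,B,A$ contradicting Condition \ref{C:Ray}. Here I would localise to the unit cell $N(x)$ containing the crossing point and invoke Proposition \ref{prop:integrally2}, which places the relevant cell corners in $S$ and lets the bisecting diagonals of the cell, whose endpoints share a common centre, produce the forbidden configuration. Making this localisation yield precisely a violation is the crux, and it is exactly where two-dimensionality is indispensable, since in a higher-dimensional cell the diagonals no longer force a bisecting pair, in keeping with the counterexamples promised for $d\ge 3$.
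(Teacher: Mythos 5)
Your overall architecture (reduce Condition \romanref{C:Hyperplanes} to disjointness of the convex hulls, then derive a contradiction from a crossing between an $A$-segment and a $B$-segment) matches the paper's Lemmas \ref{lemma:finite3} and \ref{lemma:finite4}, but there is a genuine gap at the case you dismiss parenthetically: containment, say $B\subset\conv(A)$. There is then no point $b'\in B$ outside $\conv(A)$, so no segment $[b,b']$ crosses an edge of any $A$-triangle and no crossing configuration exists at all; the simplest instance is $B=\{b\}$ with $b\in\Int(\conv(A))$. This case cannot be ``handled similarly'': it is exactly why Lemma \ref{lemma:finite3} carries the extra hypotheses $A\setminus\conv(B)\neq\emptyset$ and $B\setminus\conv(A)\neq\emptyset$, and Example \ref{ex:3dimension} realizes precisely this configuration in $\ZZ^3$ under Condition \romanref{C:Parallelogram}, so any treatment must use integral convexity in dimension two in an essential way. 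This is where the paper does its real work: it proves that edges of a finite integrally convex set in $\ZZ^2$ lie on lines of direction $p\in\{-1,0,1\}^2\setminus\{0\}$, so that from any $b\in B\subset\conv(A)$ one can move parallel to a coordinate axis in both directions inside $S$ and reach boundary lattice points, which belong to $A$; taking the nearest such points on each side produces a violation of Condition \romanref{C:Parallelogram} (or directly of Condition \romanref{C:Ray}). Your proposal contains no substitute for this step, and it is the only place where ``integrally convex'' rather than merely ``hole free'' is actually used.

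The crossing case also has a soft spot. Localising to the integral neighborhood $N(x)$ of the crossing point $x$ does not work as described: the lattice points of $N(x)$ need not lie on either segment (for instance $[(0,0),(3,1)]$ and $[(0,1),(3,0)]$ cross at $(3/2,1/2)$, and none of the four surrounding lattice points lies on either segment), and even when all four corners of the cell belong to $S$ their colouring need not pair opposite corners, so the common midpoint of the two diagonals yields no violation of Condition \romanref{C:Parallelogram} in general. The paper instead chooses a crossing pair $a_1,a_2,b_1,b_2$ with $\conv\set{a_1,a_2,b_1,b_2}\cap\ZZ^2$ minimal, uses hole freeness to assume this intersection is exactly $\set{a_1,a_2,b_1,b_2}$, and then an equal-area argument pins $b_2$ to one of the three points $c$, $c\pm(a_1-a_2)$ with $c=a_1+a_2-b_1$, each of which violates Condition \romanref{C:Parallelogram}. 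You would need some such global minimality argument in place of the unit-cell localisation.
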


\begin{thm}
\label{thm:finite2}
Let $S \subset \ZZ^2$ be a finite hole free set. Then for $A \subset S$ and $B = S \setminus A$,  
Condition \romanref{C:Hyperplanes} and 
the $3$-parallelogram condition are equivalent.
\end{thm}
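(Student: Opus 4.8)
The plan is to prove the two implications separately, and to observe at once that the forward direction is free. For Condition \ref{C:Hyperplanes} $\Rightarrow$ $3$-parallelogram condition I would simply invoke the (unlabelled) lemma immediately following Lemma~\ref{lem:hyperplanes=>parallelogram}, which already asserts that Condition \ref{C:Hyperplanes} implies the $k$-parallelogram condition for every $k\ge 2$, in particular for $k=3$. So the entire content of the theorem is the converse: assuming $S\subset\ZZ^2$ finite and hole free and that $A,B$ satisfy the $3$-parallelogram condition, I must construct a separating sequence of hyperplanes.

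For the converse I would first extract the ray condition. Since the $3$-parallelogram condition implies the $2$-parallelogram condition (Condition \ref{C:Parallelogram}), and a hole free $S\subset\ZZ^2$ is $2$-convex hence $1$-convex, Lemma~\ref{lem:parallelogram=>ray} gives that $A,B$ satisfy Condition \ref{C:Ray}. Next I would reduce Condition \ref{C:Hyperplanes} to the existence of a single separating line in $\RR^2$: if there is a line $H_1$ whose two open sides $R^+,R^-$ satisfy $R^+\cap S\subset A$ and $R^-\cap S\subset B$, then either $H_1\cap S$ lies entirely in $A$ or in $B$, or else $A\cap H_1$ and $B\cap H_1$ are both nonempty and Condition \ref{C:Ray} furnishes a point $p\in H_1$ separating them in $H_1$; taking $H_0=\{p\}$, $H_1$, and $H_2=\RR^2$ yields a valid sequence. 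When $A$ or $B$ (or $S$ itself) is contained in a line the dimension drops and one argues directly, so the essential case is that both $\conv(A)$ and $\conv(B)$ are two-dimensional.

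The crux is therefore the geometric claim that the $3$-parallelogram condition forces $\Int\conv(A)\cap\Int\conv(B)=\emptyset$. This is genuinely necessary: if the (relative) interiors met at a point $z$, then for any candidate line one of $A,B$ would have a point strictly on the wrong side, so no separating line, and hence no Condition \ref{C:Hyperplanes}, could exist. I would prove the claim by contraposition: if the interiors meet, exhibit $a_1,a_2,a_3\in A$ and $b_1,b_2,b_3\in B$ with $a_1+a_2+a_3=b_1+b_2+b_3$. Choosing a rational $z\in\Int\conv(A)\cap\Int\conv(B)$ and clearing denominators in two representations of $z$ as convex combinations produces a coincidence of equal total weight $M$ between a multiset from $A$ and one from $B$; the real work is to push $M$ down to $3$. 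Here hole freeness enters decisively, since every lattice point of $\conv(S)$ lies in $S=A\cup B$ and is thus assignable to $A$ or $B$, while planar Carath\'eodory lets one work throughout with triangles, i.e.\ three vertices per side. The model to keep in mind is the integrally convex case (Theorem~\ref{thm:finite}), where a crossing of the two hull boundaries is trapped inside a single unit cell and the diagonal identity $(0,0)+(1,1)=(1,0)+(0,1)$ yields a weight-$2$ coincidence; for merely hole free $S$ the crossing edges may be primitive segments spanning several cells, and balancing such a crossing needs a third lattice point, which is exactly why the weight rises from $2$ to $3$.

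The main obstacle I anticipate is precisely this reduction of the weight to $3$: organizing the Carath\'eodory and lattice bookkeeping so that the final relation uses \emph{exactly} three points on each side, each correctly lying in $A$ respectively in $B$, rather than an uncontrolled $M$. I would structure the argument around a minimal crossing configuration, an edge of $\conv(A)$ meeting an edge of $\conv(B)$, and use hole freeness to select lattice points on or adjacent to these edges that assemble into the required triples; the delicate point is to verify that in the plane no failing configuration forces weight larger than $3$. Once $\Int\conv(A)\cap\Int\conv(B)=\emptyset$ is secured, a separating line exists by the proper-separation theorem for convex sets with disjoint relative interiors, and the reduction of the second paragraph completes the verification of Condition \ref{C:Hyperplanes}.
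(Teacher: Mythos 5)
Your overall architecture matches the paper's: the forward implication is exactly the unlabelled lemma following Lemma~\ref{lem:hyperplanes=>parallelogram}, and the converse reduces to showing that $\conv(A)$ and $\conv(B)$ can be properly separated, after which points of $S$ lying on the separating line are handled by the ray condition (available because a hole free $S\subset\ZZ^2$ is $1$-convex). The paper packages this reduction as Lemma~\ref{lemma:finite3}. But the decisive step is absent from your proposal, and you say so yourself: you never actually produce triples $a_1,a_2,a_3\in A$ and $b_1,b_2,b_3\in B$ with $a_1+a_2+a_3=b_1+b_2+b_3$ in a failing configuration. ``Clearing denominators and pushing the weight $M$ down to $3$'' is the entire content of the theorem, not a bookkeeping exercise, so as written the argument is incomplete at its core.

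Moreover, your plan for closing the gap aims at the wrong configuration. You propose to organize the argument around an edge of $\conv(A)$ meeting an edge of $\conv(B)$, and suggest that such a crossing is what forces the weight up from $2$ to $3$ for merely hole free $S$. In fact the paper's Lemma~\ref{lemma:finite4} shows that any crossing of $\conv\set{a_1,a_2}$ with $\conv\set{b_1,b_2}$ already violates the $2$-parallelogram condition for every finite hole free $S\subset\ZZ^2$: after shrinking to the case $\conv\set{a_1,a_2,b_1,b_2}\cap\ZZ^2=\set{a_1,a_2,b_1,b_2}$ one finds $b_1+b_2\in\set{a_1+a_2,\,2a_1,\,2a_2}$. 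Hence, via Lemma~\ref{lemma:finite3}, the only situation surviving the $2$-parallelogram condition is a containment, $B\subset\conv(A)$ or $A\subset\conv(B)$ --- this is precisely what the counterexamples of Figure~\ref{fig:2-counter} look like --- and it is there, not at an edge crossing, that the third point is needed. The paper handles it with Lemma~\ref{lem:traiange}: for a minimal lattice triangle $\conv\set{a_1,a_2,a_3}$ with vertices in $A$, primitive edges, and a nonempty set $B$ of interior lattice points, one sets (after translating $a_3=0$) $b_1=\argmax_{b\in B}\braket{b-a_1,a_2^\perp}$, $b_2=\argmax_{b\in B}\braket{b-a_2,a_1^\perp}$, and verifies that $b_3=a_1+a_2-b_1-b_2$ lies in the open triangle, hence in $B$ by hole freeness, yielding the forbidden relation $a_1+a_2+a_3=b_1+b_2+b_3$. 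An explicit construction of this kind is what your proof needs and does not supply.
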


We will prove these theorems in later subsections
by using the following lemma.

\begin{lemma}
\label{lemma:finite3}
Let $S \subset \ZZ^2$ be a finite hole free set. 
If $A$ and $B$ 
satisfy Condition \romanref{C:Parallelogram}, 
$A \setminus \conv(B) \neq \emptyset$ and $ B \setminus \conv(A) \neq \emptyset$,
then Condition \romanref{C:Hyperplanes} holds.
\end{lemma}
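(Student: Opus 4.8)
The plan is to reduce the claim to producing a single straight line that separates $A$ and $B$ in $\RR^2$, and then to build the flag required by Condition \ref{C:Hyperplanes} on top of it. Since $S$ is hole free it is $1$-convex, so by Lemma \ref{lem:parallelogram=>ray} Condition \ref{C:Parallelogram} already yields Condition \ref{C:Ray}; I will use the ray condition freely. The first genuine step is a coloring fact for lattice points on a line: if $a_1,a_2\in A$ lie on a common line $m$ and a lattice point $c\in S$ lies strictly between them, then $c\in A$, for otherwise $c\in B$ would make $A\cap m$ fail to be a ray, contradicting Condition \ref{C:Ray}. In particular every lattice point on an edge of $\conv(A)$ lies in $A$ and, symmetrically, every lattice point on an edge of $\conv(B)$ lies in $B$; by hole freeness all of these lattice points belong to $S$, so the statement is meaningful.

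Next I reduce ``existence of a separating line'' to a statement about the two hulls. A line with normal $u$ realizes $R^+\cap S\subset A$ and $R^-\cap S\subset B$ exactly when $\min_{a\in A}\langle u,a\rangle\ge\max_{b\in B}\langle u,b\rangle$, i.e.\ when $\conv(A)$ and $\conv(B)$ are weakly separated by that line; such a $u$ exists if and only if $\Int\conv(A)\cap\Int\conv(B)=\emptyset$. Here the two non-degeneracy hypotheses enter: choosing $a^\ast\in A\setminus\conv(B)$ and $b^\ast\in B\setminus\conv(A)$ shows that neither hull is contained in the other, so if the relative interiors did meet, the boundaries $\partial\conv(A)$ and $\partial\conv(B)$ would have to cross, and one can extract an edge $[a_1,a_2]$ of $\conv(A)$ and an edge $[b_1,b_2]$ of $\conv(B)$ crossing transversally at an interior point $x$ of both. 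Thus $a_1,a_2\in A$, $b_1,b_2\in B$, and $a_1,b_1,a_2,b_2$ are in convex position with these two edges as crossing diagonals. (The degenerate cases in which $\conv(A)$ or $\conv(B)$ is lower dimensional are treated separately and are easier.)

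The heart of the proof, and the step I expect to be the main obstacle, is to convert such a crossing into a violation of Condition \ref{C:Parallelogram}. By the coloring fact the whole $A$-diagonal $a_1,a_1+d_A,\dots,a_2$ (with $d_A$ the primitive lattice direction) lies in $A$, the whole $B$-diagonal $b_1,\dots,b_2$ lies in $B$, and by hole freeness every lattice point of the quadrilateral lies in $S$ and so is colored. I would then run a discrete intermediate-value argument on sums: the two affine lines of doubled points $\{2a_1+s\,d_A\}$ and $\{2b_1+t\,d_B\}$ meet at $2x$, and although $2x$ need not be a lattice point, sliding a pair of $A$-points and a pair of $B$-points so that their sums move in unit lattice steps forces a common value $a+a'=b+b'$, where the interior lattice points supplied by hole freeness are exactly what let the two sum-progressions be matched. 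This produces $a,a'\in A$ and $b,b'\in B$ with $a+a'=b+b'$, contradicting Condition \ref{C:Parallelogram}. Making the parity bookkeeping of this matching airtight is the delicate point.

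Finally, once $\Int\conv(A)\cap\Int\conv(B)=\emptyset$ is established, the separating hyperplane theorem gives a line $H_1$ with $A\subset\overline{R^+}$ and $B\subset\overline{R^-}$, so the open half-planes satisfy $R^+\cap S\subset A$ and $R^-\cap S\subset B$ and $H_1$ separates $A$ and $B$ in $\RR^2$. To complete Condition \ref{C:Hyperplanes} I examine $S\cap H_1$: if it is monochromatic I take $k=1$ and the flag $\{H_1,\RR^2\}$; if it meets both $A$ and $B$, Condition \ref{C:Ray} applied to the line $H_1$ splits it into an $A$-ray and its complement, and choosing $H_0$ to be the extreme $A$-point on $H_1$ (so that $H_0\cap S\subset A$) gives the flag $\{H_0,H_1,\RR^2\}$. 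In either case $A$ and $B$ satisfy Condition \ref{C:Hyperplanes}, as required.
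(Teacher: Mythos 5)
Your overall architecture is sound and runs parallel to the paper's: reduce to showing that the two convex hulls cannot overlap, and observe (as the paper does via its Lemma~\ref{lemma:finite4} together with a triangle analysis) that an overlap would force a segment $\conv\{a_1,a_2\}$ with $a_1,a_2\in A$ to meet a segment $\conv\{b_1,b_2\}$ with $b_1,b_2\in B$. The final flag construction via Condition~\romanref{C:Ray} on the separating line is also fine (the paper avoids it by proving the stronger statement $\conv(A)\cap\conv(B)=\emptyset$, so that strict separation applies directly). The genuine gap is in the step you yourself flag as delicate: deriving a violation of Condition~\romanref{C:Parallelogram} from two crossing segments by matching the sum-progressions $\{2a_1+s\,d_A\}$ and $\{2b_1+t\,d_B\}$. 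These progressions need not share a value. Take $a_1=(0,0)$, $a_2=(2,1)$, $b_1=(1,0)$, $b_2=(0,1)$: the segments cross at $(2/3,1/3)$, the only lattice points on the $A$-segment are its endpoints, the $A$-sums are $\{(0,0),(2,1),(4,2)\}$, the $B$-sums are $\{(2,0),(1,1),(0,2)\}$, and these sets are disjoint. The meeting point $2x=(4/3,2/3)$ of the two sum-lines is not a lattice point, and no amount of parity bookkeeping along the two diagonals produces $a+a'=b+b'$.

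What rescues this configuration is not an intermediate-value argument but a case analysis on the colors of the \emph{other} lattice points that hole-freeness places inside the quadrilateral: here $(1,1)\in S$, and whether $(1,1)\in A$ (then $(1,1)+(0,0)=(1,0)+(0,1)$) or $(1,1)\in B$ (then $(0,0)+(2,1)=(1,1)+(1,0)$), Condition~\romanref{C:Parallelogram} fails — but with different witnessing pairs in the two cases. This color-dependent branching is the actual content of the paper's Lemma~\ref{lemma:finite4}: one first passes, by minimality with respect to inclusion of $\conv\{a_1,a_2,b_1,b_2\}\cap\ZZ^2$ and hole-freeness, to a crossing quadrilateral containing no lattice points besides its four vertices, and then shows $b_2$ must equal one of $c$, $c+\delta$, $c-\delta$ with $c=a_1+a_2-b_1$, each of which contradicts Condition~\romanref{C:Parallelogram}. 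Your proposal needs this lemma (or an equivalent descent/case argument) supplied in full; as written, the central step does not go through. Separately, the extraction of a \emph{transversal} edge crossing from overlapping interiors also has degenerate subcases (an edge of one hull passing through a vertex of the other, or lying entirely inside it) that the paper handles through the two-case triangle argument and through allowing $a_1=a_2$ in Lemma~\ref{lemma:finite4}; these should not be waved off.
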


We show the following lemma before the proof of Lemma \ref{lemma:finite3}.
\begin{lemma}
\label{lemma:finite4}
Let $S \subset \ZZ^2$ be a finite hole free set. 
If $A$ and $B$ 
satisfy  Condition \romanref{C:Parallelogram}, then 
$\conv \set{a_1,a_2} \cap \conv \set{b_1,b_2} = \emptyset$ for all $a_1,a_2 \in A$ and $b_1,b_2 \in B$.
\end{lemma}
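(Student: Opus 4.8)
The plan is to argue by contradiction and to descend to a minimal crossing configuration. Suppose some $a_1,a_2\in A$ and $b_1,b_2\in B$ satisfy $\conv\{a_1,a_2\}\cap\conv\{b_1,b_2\}\neq\emptyset$. Since $S$ is hole free it is $1$-convex, so by Lemma \ref{lem:parallelogram=>ray} Condition \ref{C:Ray} holds. I record the consequence $(\star)$: on any line $L$ meeting both $A$ and $B$, the set $A\cap L$ consists of the points of $S$ lying on one ray; in particular no point of one colour lies strictly between two points of the other colour on a common line. This at once disposes of the degenerate cases — when $a_1=a_2$, when $b_1=b_2$, when the two segments are collinear, or when their intersection point is a vertex — because each of these would place a wrong-coloured lattice point (which lies in $S$ by hole freeness) strictly between two same-coloured points on a line, contradicting $(\star)$. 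Hence I may assume the two segments meet transversally at a single relatively interior point $x$, so that $a_1,b_1,a_2,b_2$ are the vertices, in this cyclic order, of a convex quadrilateral $Q$ whose two diagonals are $\conv\{a_1,a_2\}$ and $\conv\{b_1,b_2\}$.

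Among all such bad quadruples — finitely many, as $S$ is finite — I choose one minimizing the area of $Q$. The heart of the argument is that this minimal $Q$ contains no lattice point other than its four vertices. Indeed, suppose $z\in Q\cap\ZZ^2$ is a fifth lattice point; by hole freeness $z\in S$, so $z\in A$ or $z\in B$. If $z$ lies on one of the diagonals, then $(\star)$ forbids it from having the opposite colour to that diagonal's endpoints, while if it has the same colour the sub-segment of the diagonal containing $x$ gives a strictly smaller crossing quadrilateral; either way we contradict minimality or $(\star)$. If $z$ lies off both diagonals, then $z$ is strictly on one side of the line through $b_1,b_2$ and strictly on one side of the line through $a_1,a_2$; joining $z$ to whichever same-coloured original vertex lies on the far side of the appropriate diagonal produces a same-coloured segment that still crosses the opposite diagonal, and the resulting quadrilateral is properly contained in $Q$, again contradicting minimality. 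Thus $Q\cap\ZZ^2$ is exactly the four vertices.

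It remains to analyse this empty quadrilateral. By Pick's theorem a convex lattice quadrilateral with no boundary or interior lattice point beyond its vertices has area $1$, so the diagonal $\conv\{a_1,a_2\}$ splits it into two lattice triangles of area $1/2$, each of which is unimodular. Normalizing the first triangle to the standard one by an affine unimodular map and using convexity of $Q$ to pin down the fourth vertex (the orientation constraints force the two ``off-axis'' coordinates to sum to $1$), one checks that $Q$ must be a parallelogram; its diagonals then bisect each other, so $a_1+a_2=b_1+b_2$. This contradicts Condition \ref{C:Parallelogram} (taking the pair $a_1,a_2\in A$ and the pair $b_1,b_2\in B$), completing the proof. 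I expect the main obstacle to be this final structural step — proving that an empty convex lattice quadrilateral is a parallelogram — together with the bookkeeping needed to confirm that each repositioning of the extra lattice point $z$ really produces a crossing configuration of strictly smaller area.
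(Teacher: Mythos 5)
Your proof is correct and follows essentially the same route as the paper's: pass to a minimal crossing quadruple, show the resulting lattice quadrilateral is empty, and conclude that its diagonals must bisect each other so that $a_1+a_2=b_1+b_2$ contradicts Condition \ref{C:Parallelogram}. The paper reaches the same endpoint by an equal-area argument that confines $b_2$ to the three points $c, c+\delta, c-\delta$ with $c=a_1+a_2-b_1$ and $\delta=a_1-a_2$ (an enumeration that also absorbs, via the $a_1=a_2$ form of Condition \ref{C:Parallelogram}, the degenerate cases you dispatch separately through Condition \ref{C:Ray}), rather than via Pick's theorem and the classification of empty convex lattice quadrilaterals, but the substance is the same.
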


\begin{proof}
Let us assume that for some $a_1,a_2 \in A$ and $b_1,b_2 \in B$, $\conv \set{a_1,a_2} \cap \conv \set{b_1,b_2} \neq \emptyset$.  Consider $a_1, a_2, b_1, b_2$
such that 
$\conv \set{a_1,a_2,b_1,b_2} \cap \ZZ^2$ is minimal with respect to inclusion.
Thanks to the hole freeness of $S$,  we can 
suppose that 
$\conv \set{a_1,a_2,b_1,b_2} \cap \ZZ^2 = \set{a_1,a_2,b_1,b_2}$
without loss of generality.

Let $\delta = a_1 - a_2$ and $c = a_1 +a_2 - b_1$. Also let $c_+ = c + \delta, c_- = c - \delta$. 
Since the area of the triangle $\conv \set{a_1,a_2,b_1}$ is equal to that of 
the triangle $\conv \set{a_1,a_2,b_2}$, $b_2$ has to be on the line parallel to $\conv \set{a_1,a_2}$
through $c$. Also $\conv \set{a_1,a_2,b_1,b_2} \cap \ZZ^2 = \set{a_1,a_2,b_1,b_2}$ implies 
$b_2 \in \conv \set{c_+,c_-}$. Then since $\conv \set{a_1,a_2} \cap \ZZ^2 = \set{a_1,a_2}$, we have
\begin{align*} 
\conv \set{c_+,c_-} \cap \ZZ^2 = \Set{c,c_+,c_-}. 
\end{align*}
Thus $b' \in \set{c,c_+,c_-}$. However in any case we have a contradiction to Condition \romanref{C:Parallelogram}.
\end{proof}

\begin{proof}[Proof of Lemma \ref{lemma:finite3}]
If $\conv(A)\cap \conv(B)=\emptyset$, then Condition \romanref{C:Hyperplanes} 
holds by the usual hyperplane separation theorem.  Therefore it suffices
to show that $\conv(A)\cap \conv(B)\neq \emptyset$ leads to a contradiction.

Firstly, recall that the convex hull of a finite set of points is
equal to the union of simplices spanned by a finite set of the points \cite[Thm.2.15]{zie}. 
Therefore, if $\conv(A) \cap \conv(B) \neq \emptyset$,
there exist $a_1,a_2,a_3 \in A$ and $b_1,b_2,b_3 \in B$ such that 
$\conv\set{a_1,a_2,a_3}\cap \conv\set{b_1,b_2,b_3}\neq\emptyset$. 
By Lemma \ref{lemma:finite4}, it suffices to show that for some $a,a' \in A$ and $b,b' \in B$, $\conv \set{a,a'}$ 
and $\conv \set{b,b'}$ intersect.

Let $A' = \set{a_1,a_2,a_3}$ and $B' =  \set{b_1,b_2,b_3}$.
As the first case suppose that some $b \in B'$ is in $\conv(A')$. Then since $B \setminus \conv(A) \neq \emptyset$, 
there exists $b' \in B$ such that $b' \notin \conv(A')$. In this case, clearly the line segment $\conv \set{b,b'}$
crosses an edge of the triangle $\conv(A')$.

As the second case suppose 
that $B' \cap \conv(A') = \emptyset$ and $A' \cap \conv(B') = \emptyset$. 
The intersection of two triangles $\conv(A') \cap \conv(B')$ is a polygon.
Consider a vertex $v$ of the polygon. Since $v \in \conv(A') \cap \conv(B')$, $v$ is not a member of $A'\cup B'$.
Now each triangle is defined by three inequalities and  the intersection of the two triangles is
defined by effective inequalities of those six. 
Hence $v$ is the intersection of two edges 
among these  six edges. If $v$ is the intersection of two edges of one triangle,
then it reduces to the first case.
Hence $v$ is the intersection of
one edge of $\conv(A')$ and another edge of $\conv(B')$.
\end{proof}

In the finite case, note that Condition \ref{C:Ray} is not equivalent to Conditions \ref{C:Parallelogram} and
\ref{C:Hyperplanes}. 
\begin{ex}[A counterexample to  \ref{C:Ray}  $\Rightarrow$ \ref{C:Hyperplanes}] When 
$A = \{ (0,0),(1,1) \}$ and 
$B = \{ (1,0),(0,1) \}$, $S = A \cup B$ is integrally convex.
In this case, both $A$ and $B$ satisfy Condition \ref{C:Ray}. However we easily find that
Condistions \ref{C:Hyperplanes}  and \ref{C:Parallelogram} do not hold. 
\end{ex}

In the case $d=2$,
we have Theorems \ref{thm:finite} and \ref{thm:finite2}.
We want to consider more general cases or higher dimensions.
However, there are some counterexamples.

Firstly we consider Theorem \ref{thm:finite}. For general sets $S \subset \ZZ^2$,
the equivalence of  Theorem \ref{thm:finite} is violated as shown in the following examples.
\begin{ex}[Counterexamples to \ref{C:Parallelogram} $\Rightarrow$ \ref{C:Hyperplanes}]
We show some counterexamples in Figure \ref{fig:2-counter}. The points of $A$ are denoted by black circles, and 
those of $B$ by white circles. In these cases, Condition \ref{C:Parallelogram} holds  
(moreover Condition \ref{C:Ray} also holds from Lemma \ref{lem:c1=>c3=>c2}). 
However, $A$ and $B$ are not separated. 
\end{ex}
\begin{figure}
 \begin{center}
 \includegraphics[clip,width=0.2\textwidth]{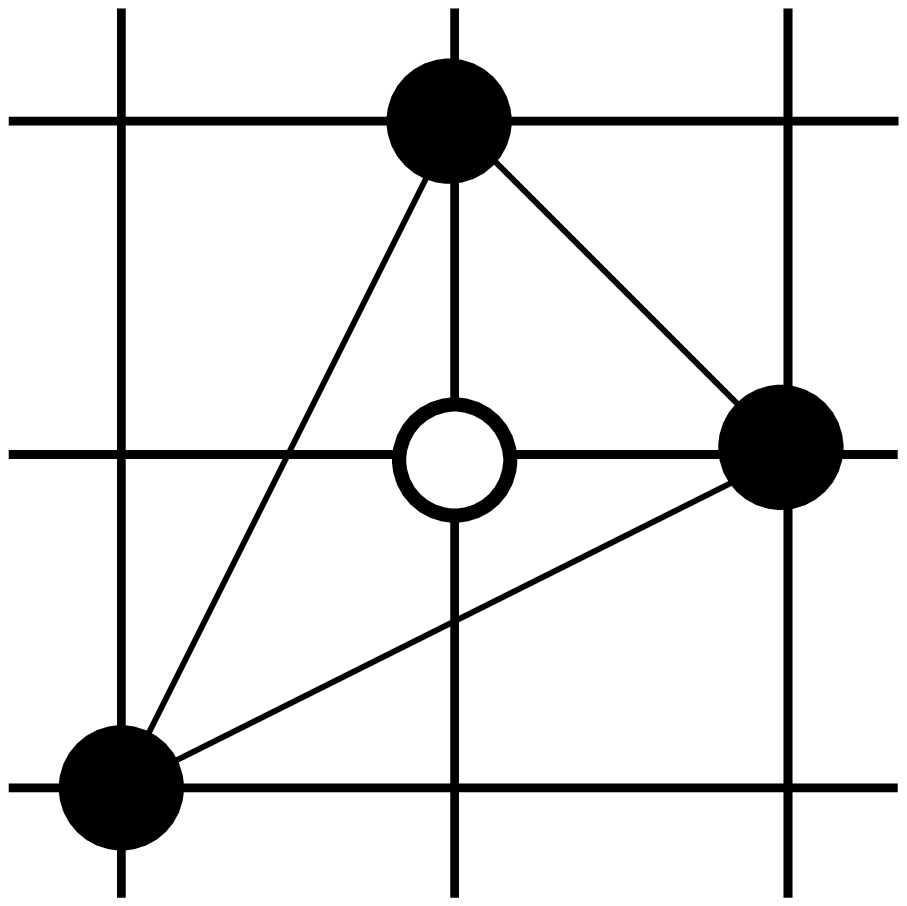}
    \label{fig:lattice2}
  \hfil
  \includegraphics[clip,width=0.2\textwidth]{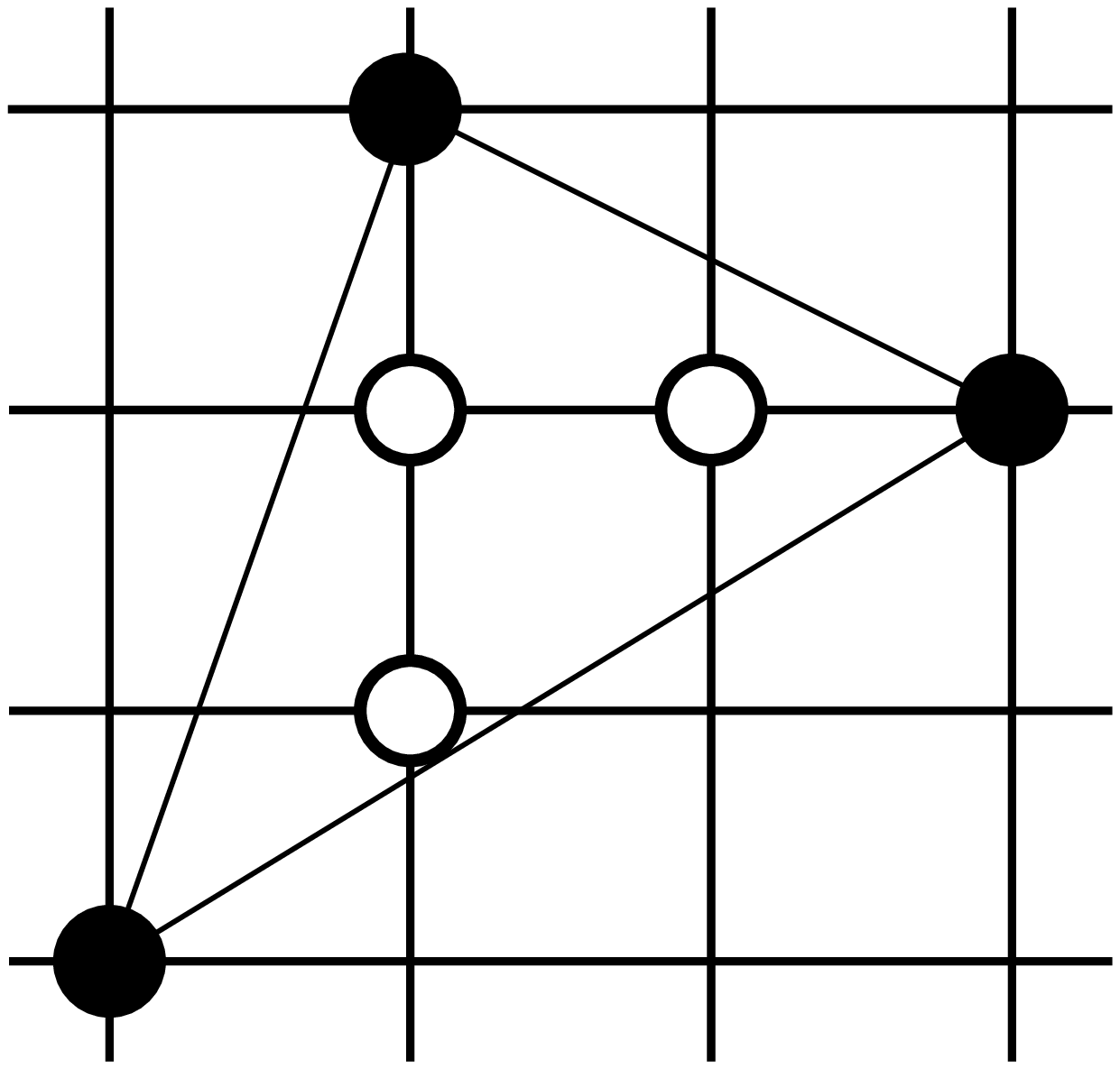}
    \label{fig:lattice4}
  \hfil
  \includegraphics[clip,width=0.35\textwidth]{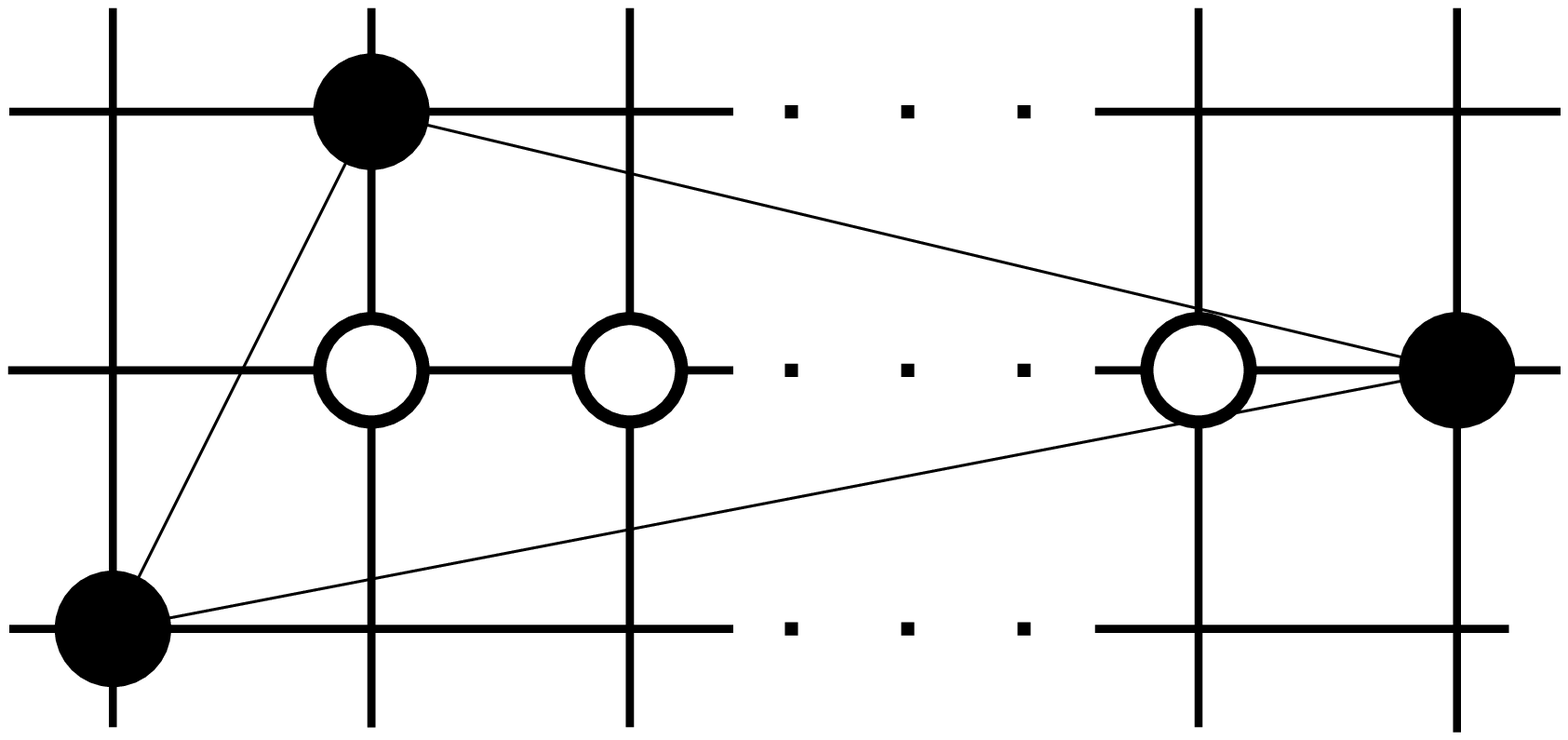}
    \label{fig:lattice3}
 \caption{Some counterexamples to \ref{C:Parallelogram} $\Rightarrow$ \ref{C:Hyperplanes} in $\ZZ^2$}
 \label{fig:2-counter}
 \end{center}
\end{figure}

We next consider the case $S \subset \ZZ^d$ for $d \ge 3$. 
In this case, by examples we show that 
Conditions \ref{C:Hyperplanes}, \ref{C:Parallelogram} and \ref{C:Ray} are not equivalent.
\begin{example}\label{ex:3dimension}
Consider 
\begin{align*}
S = \Set{ (x_1,x_2,x_3) \in \ZZ^3 | 0 \le x_1 \le 5,\, 0 \le x_2 \le 4,\, 0 \le x_3 \le 3 }. 
\end{align*}
Let $v_1, v_2, v_3$ denote
$ (5,0,0), (0,4,0), (0,0,3)$ and
$S' = \conv \{ 0,v_1,v_2,v_3 \} \cap \ZZ^3$
as in Figure \ref{fig5},  $A = S' \setminus \{ (2,1,1) \}$ and $B = S \setminus A$. Then 
$\conv(A) \cap \ZZ^3 = S'$ and it is clear
that there is no hyperplane separating $A$ and $B$. However we can easily check that Condition 
\ref{C:Parallelogram} holds.
\begin{figure}
\begin{center}
	\includegraphics[clip,width=0.5\hsize]{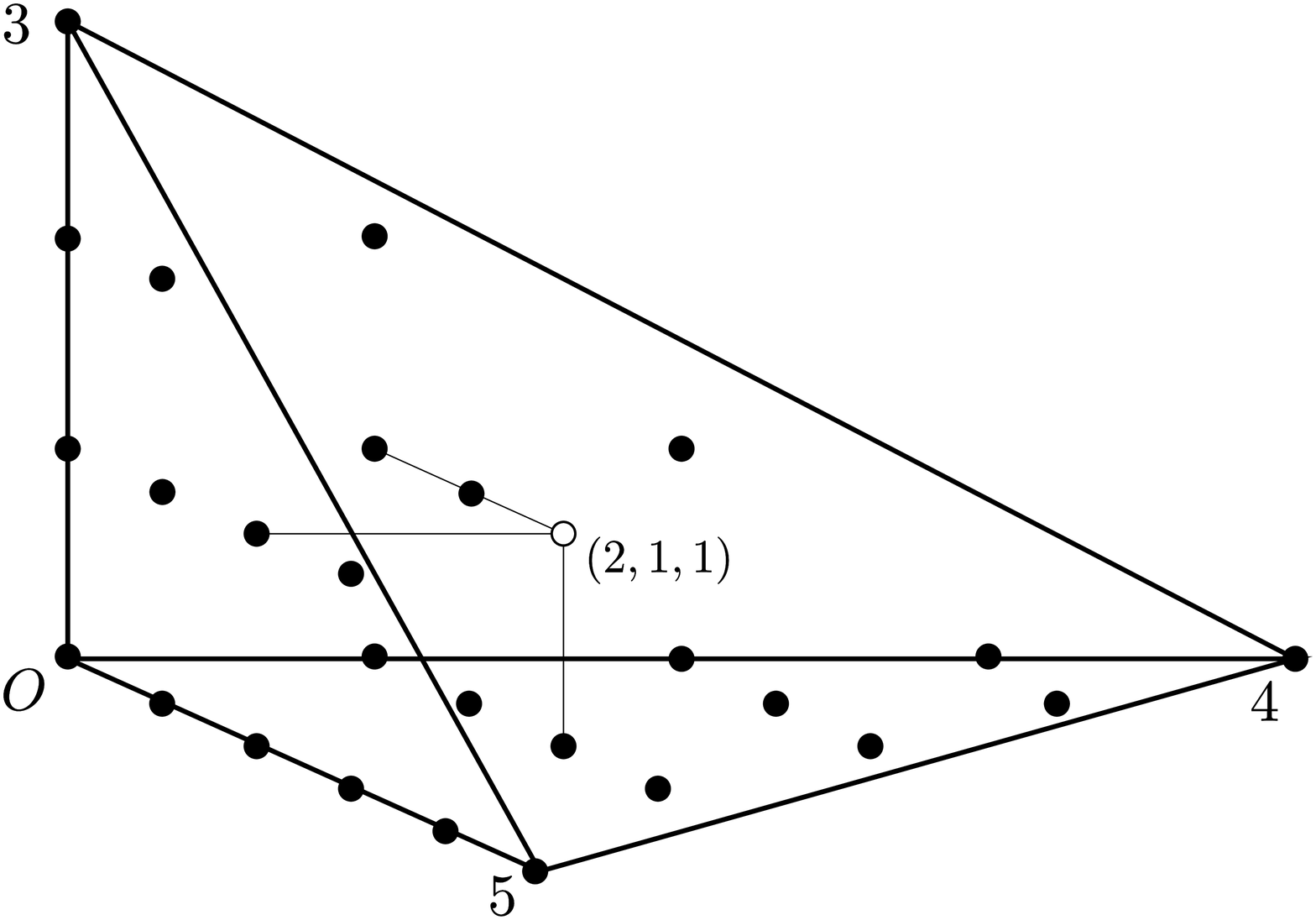}
	\caption{A counterexample in $\ZZ^3$}
	\label{fig5}
\end{center}
\end{figure}
\end{example}

In this example, we see that the set $A$ is the smallest $1$-convex set containing $\{ 0, v_1,v_2,v_3 \}$,
which we call the $1$-convex hull of $\{ 0,v_1,v_2,v_3 \}$. 
Similarly define the $k$-convex hull of $S$ as the smallest $k$-convex set containing $S$. 
In this case, since we have 
\begin{align*} 
\frac{1}{3}(5,0,0) + \frac{1}{3} (0,0,3) + \frac{1}{3} (1,3,0) 
= (2,1,1), 
\end{align*}
$S$ is $1$-convex but it is not $2$-convex (hence not $3$-convex). 
Therefore the $2$-convex hull of  $\{ 0,v_1,v_2,v_3 \}$ is the same as $S'$.

Then we are interested in the following example which shows that the
($d-1$)-convex hull is not necessarily $d$-convex.

\begin{example}
\label{ex:13-7-4}
Let $v_1, v_2, v_3$ denote the points
$(13,0,0), (0,7,0), (0,0,4)$ and  $S' = \conv \{ 0,v_1,v_2,v_3 \} \cap \ZZ^3$. 
Then by some detailed  calculation involving inner products, 
it can be shown that there exists no $a_1,a_2,a_3 \in S'$ such that 
\begin{align*} 
 (6,2,1) \in \conv \{ a_1, a_2 \}  \ \ \text{or} \ \ (6,2,1) \in \conv \{ a_1,a_2,a_3 \}. 
\end{align*}
Hence, the $2$-convex hull of $\{ 0,v_1,v_2,v_3 \}$ is not $3$-convex. 
\end{example}

This example shows that the equivalence of Theorem \ref{thm:finite2} is also violated for $d \ge 3$.

Thus we are interested in generalizations of Theorems \ref{thm:finite} and \ref{thm:finite2} using the $k$-parallelogram condition where $k \ge d$.
However the following counterexamples suggest that generalizations 
of Theorem \ref{thm:finite2} and Lemma \ref{lemma:finite3} are difficult 
for $d \ge 3$. 
\begin{ex} (An example based on a terminal Fano polytope in $\RR^3$) \\ 
\label{ex:d3-4par-1}
Let $v_1, v_2, v_3, v_4$ denote the points $(1,0,0), (0,1,0), (1,1,2)$, $(-1,-1,-1)$ and 
$S' = \conv \set{v_1,v_2,v_3,v_4}$. 
Then since 
\begin{align*} 
 v_1 + v_2 + v_3 + 2v_4 = 0, 
\end{align*}
we have $0 \in S'$. Let $A = S' \setminus \set{0}$ and $B = \set{0}$.
Since $S'$ has no lattice points other than $v_1, \dots, v_4,0$, we have 
the $4$-parallelogram condition.
However we can not separate $A$ and $B$. 
\end{ex}
\begin{ex}
\label{ex:d3-4par-2}
Let $a_1, a_2, a_3$ denote the points $(1,0,0), (0,1,0), (0,0,1)$ and
$b_1,b_2$ denote $(0,0,0), (1,1,2)$. 
Let $A = \set{a_1,a_2,a_3}$, $B = \set{b_1,b_2}$. Then $S = A \cup B$ is hole free and
$A \cap \conv(B) = B \cap \conv(A) = \emptyset$. 
We clearly have the $3$-parallelogram  condition, 
but not Condition \ref{C:Hyperplanes}.
\end{ex}

Despite the above counterexamples, 
we conjecture that a generalization of Theorem \ref{thm:finite} to  $d \ge 3$ holds (see Section \ref{sec:conclusion}). 

\subsection{Proof of Theorem \ref{thm:finite}}

For proving Theorem \ref{thm:finite}
we show the following basic lemmas on an integrally convex set.

\begin{lem}
For a line $L$ in $\RR^d$, 
if $L \cap \ZZ^d$ is integrally convex, then 
$L$ is of the form
\begin{align}
L = \Set{ x_0 + tp | t \in \RR} \label{eq:integrally line}
\end{align}
for some $x_0 \in \ZZ^d$ and $p \in \set{-1,0,1}^d \setminus \set{0}$.
\end{lem}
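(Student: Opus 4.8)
The plan is to first pin down the combinatorial structure of $L \cap \ZZ^d$, and then extract the constraint on the direction of $L$ from integral convexity by testing integral convexity at a single, well-chosen point. Throughout I would treat the substantive case in which $L$ meets the lattice in at least two points; if $L \cap \ZZ^d$ is empty or a single point the hypothesis carries no information about the direction of $L$, and in the applications of this section the relevant lines always contain points of both $A$ and $B$, hence at least two lattice points.

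First I would observe that if $L$ contains two distinct integer points then its direction is rational, so the integer points of $L$ form an arithmetic progression. Concretely, choose two \emph{adjacent} lattice points $x_0, x_1 \in L \cap \ZZ^d$, meaning that no lattice point lies strictly between them on $L$, and set $q = x_1 - x_0 \in \ZZ^d \setminus \set{0}$. Then $q$ is primitive and $L \cap \ZZ^d = \Set{ x_0 + kq | k \in \ZZ }$, with $x_0$ and $x_0 + q$ consecutive in this progression. In particular $L = \Set{ x_0 + tq | t \in \RR}$, so it suffices to prove that $q \in \set{-1,0,1}^d$.

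Next I would argue by contradiction: suppose $|q_i| \ge 2$ for some coordinate $i$, and test integral convexity at the midpoint $m = x_0 + \tfrac12 q$. Since $m$ lies on the segment $\conv\set{x_0, x_0 + q}$ we have $m \in \conv(L \cap \ZZ^d)$, so integral convexity would force $m \in \conv\bigl( (L \cap \ZZ^d) \cap N(m) \bigr)$. The key computation is that the two flanking lattice points satisfy $\| m - x_0 \|_\infty = \| m - (x_0 + q) \|_\infty = \tfrac12 \| q \|_\infty \ge 1$, so neither $x_0$ nor $x_0 + q$ lies in the open cube $N(m)$.

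The main point to get right — and the only place the argument needs care — is concluding from this that $m$ cannot be a convex combination of $(L \cap \ZZ^d) \cap N(m)$ at all. Here I would use that $N(m)$ is convex, so $L \cap N(m)$ is an open subinterval of $L$ and the lattice points it contains form a contiguous block of the arithmetic progression. Since $x_0$ and $x_0 + q$ are the lattice points immediately flanking $m$ and both are excluded from $N(m)$, every lattice point of $L$ lying in $N(m)$ must lie strictly on one side of $m$; hence $m \notin \conv\bigl((L \cap \ZZ^d) \cap N(m)\bigr)$, contradicting integral convexity. Therefore $|q_i| \le 1$ for all $i$, i.e.\ $q \in \set{-1,0,1}^d \setminus \set{0}$, and taking $p = q$ completes the proof.
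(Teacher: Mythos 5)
Your proof is correct and follows essentially the same route as the paper's: both apply the definition of integral convexity at a single well-chosen non-lattice point of $L$ and use the fact that the integral neighborhood is an open unit cube to force two lattice points of $L$ within $\ell_\infty$-distance $1$ of each other. The only cosmetic differences are that you argue by contradiction at the midpoint of two adjacent lattice points while the paper argues directly at a small perturbation of a lattice point, and that you make explicit the (also implicit in the paper) assumption that $L$ contains at least two lattice points.
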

\begin{proof}
Let $I = L \cap \ZZ^d$. For an integer point $x_0 \in L \cap \ZZ^d$, choose $\alpha \in (-1,1)^d \setminus \set{0}$ such that $x_0 + \alpha \in L \setminus I$.
We denote $x = x_0 + \alpha$. Consider the intersection of $I$ and the integral neighborhood $N(x)$. Since $L = \conv(I)$ is a line, 
$I \cap N(x)$ contains two points, and one of them is $x_0$. Thus another one has to be 
$x_0 + p$ where $p \in \{ -1,0,1 \}^d \setminus \set{0}$. Then we have the lemma. 
\end{proof}
\begin{lem}
The faces of a finite integrally convex set in $\RR^d$ are integrally convex.
\end{lem}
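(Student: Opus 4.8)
The plan is to realize a face of $S$ as $T = S \cap F$, where $F$ is a face of the polytope $\conv(S)$, and to verify the integral convexity of $T$ directly from the defining condition in Definition \ref{def:integrally}. First I would record the easy containment: since $T \subset F$ and $F$ is convex, $\conv(T) \subset F \subset \conv(S)$. Thus any $x \in \conv(T)$ already lies in $\conv(S)$, and the integral convexity of $S$ gives $x \in \conv(S \cap N(x))$. The task is therefore to upgrade this to $x \in \conv(T \cap N(x))$, i.e.\ to show that the integer points of $S \cap N(x)$ witnessing $x$ may all be taken inside the face $F$.

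The key step exploits the supporting hyperplane of the face. Write $F = \conv(S) \cap \Set{ y \in \RR^d | \langle c,y \rangle = \gamma }$ with $\langle c, y\rangle \le \gamma$ for every $y \in \conv(S)$. Fix $x \in \conv(T)$ and a representation $x = \sum_j \lambda_j s_j$ with $s_j \in S \cap N(x)$, $\lambda_j > 0$ and $\sum_j \lambda_j = 1$, which exists by the integral convexity of $S$. Since $x \in F$ we have $\langle c, x\rangle = \gamma$, while each $s_j \in S \subset \conv(S)$ gives $\langle c, s_j\rangle \le \gamma$. Taking the convex combination yields $\gamma = \sum_j \lambda_j \langle c, s_j \rangle \le \gamma$, and since this is a sum of the nonpositive terms $\lambda_j(\langle c,s_j\rangle-\gamma)$ with all $\lambda_j > 0$, equality forces $\langle c, s_j\rangle = \gamma$ for every $j$. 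Hence each $s_j \in \conv(S) \cap \Set{ y | \langle c,y \rangle = \gamma } = F$, so $s_j \in S \cap F \cap N(x) = T \cap N(x)$. This gives $x \in \conv(T \cap N(x))$, which is exactly the integral convexity of $T$.

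I expect the only delicate point to be the bookkeeping around the meaning of ``face.'' One should note that, because $S$ is integrally convex it is hole free, so $\conv(S)$ is a lattice polytope and $T = S \cap F$ agrees with $F \cap \ZZ^d$, the lattice points of the face; this makes $T$ a genuine finite set of integer points to which Definition \ref{def:integrally} applies, and in fact gives $\conv(T) = F$ if that identification is wanted, although the argument above uses only the inclusion $\conv(T) \subset F$. No separate treatment of low-dimensional faces or of the improper face $F = \conv(S)$ is needed, since the supporting-hyperplane argument is uniform, and the zero-dimensional faces are trivially integrally convex.
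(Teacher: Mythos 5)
Your proof is correct, and it takes a genuinely different route from the paper's. The paper first localizes via Proposition \ref{prop:integrally2}, replacing $\conv(S)$ by $\conv(S)\cap\conv(N(x))=\conv(S\cap N(x))$, observes that this is (a translate of) a $0/1$-polytope so that its vertex set is exactly $S\cap N(x)$, and then chases the face--vertex identities \eqref{eq:zie1}--\eqref{eq:zie2} to verify the integral convexity of the localized face $F_x\cap\ZZ^d$. You instead work globally: you use the standard fact that every face of a polytope is exposed, write $F=\conv(S)\cap\Set{y\mid \langle c,y\rangle=\gamma}$ with $\langle c,y\rangle\le\gamma$ on $\conv(S)$, and note that a convex combination $x=\sum_j\lambda_j s_j$ with all $\lambda_j>0$ landing on the hyperplane forces every $s_j$ onto it, hence into $F$; this upgrades $x\in\conv(S\cap N(x))$ to $x\in\conv(T\cap N(x))$ in one step. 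Your argument needs no $0/1$-polytope or vertex bookkeeping, only the definition of integral convexity plus exposedness of polytope faces, and it correctly handles the identification $T=S\cap F=F\cap\ZZ^d$ via hole-freeness and the improper and zero-dimensional faces. The paper's version buys, as a by-product, the explicit description of the vertex sets of the local polytopes (which it reuses implicitly when it restricts attention to edges of the form \eqref{eq:integrally line}), but as a proof of the stated lemma your supporting-hyperplane argument is shorter and complete.
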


\begin{proof}
Let $S \subset \ZZ^d$ be a finite integrally convex set. From Proposition \ref{prop:integrally2}, for $x \in \RR^d$, 
we can consider $\conv(S \cap N(x))$ as $\conv(S)$ restricted to $\conv(N(x))$. 
For a face $F$ of  $\conv(S)$ and
$x \in F$,  let $F_x = F \cap \conv(N(x))$. Since $F_x$ is a face of $\conv(S) \cap \conv(N(x))$,  
it suffices to show that $F_x \cap \ZZ^d$ is integrally convex. 

Let $P \subset \RR^d$ be a polytope, $\vvert(P)$ be the set of all vertices of $P$, and $E$ be a face of $P$.  Then 
\begin{align}
	P = \conv(\vvert(P)), \label{eq:zie1} \\
	\vvert(E) = E \cap \vvert(P). \label{eq:zie2}
\end{align} 
Since $\conv (S \cap N(x))$ is a 0/1-polytope, we have $\vvert(\conv(S \cap N(x)) = S \cap N(x)$.
Thus  by  \eqref{eq:zie2} we have
\begin{align*}
	\vvert(F_x) &= F_x \cap \vvert(\conv(S \cap N(x))) \\
				 &= F_x \cap S \cap N(x)
				= F_x \cap N(x).
\end{align*}
Therefore, for any $y \in F_x$, 
\begin{align}
	\conv(\vvert(F_x) \cap N(y))  &= \conv(F_x \cap N(x) \cap N(y)) \notag \\
											&= \conv(F_x \cap N(y)). \label{eq:proof1}
\end{align}
Since $F_x \cap \conv (N(y))$ is a face of $F_x$, we have
\begin{align}
	\vvert(F_x \cap \conv(N(y))) &= F_x \cap \conv(N(y)) \cap \vvert(F_x) \notag \\
											&= \vvert(F_x) \cap N(y). \label{eq:proof2}
\end{align}
Thus combining the equations \eqref{eq:zie1}, \eqref{eq:proof1} and \eqref{eq:proof2}, 
we see that $F_x \cap \ZZ^d$ is integrally convex.
\end{proof} 

From these lemmas, 
as an edge of  an integrally convex set in $\ZZ^2$ we only need to consider 
lines of the form \eqref{eq:integrally line}. 
We now finish our proof of  Theorem \ref{thm:finite}. 
In view of Lemma 4.3, suppose  that $B \subset \conv(A)$
or $A\subset \conv(B)$.   Without loss of generality let $B \subset \conv(A)$. 
Then 
\begin{align*} 
S = A \cup B \subset \conv(A) \cup B = \conv(A). 
\end{align*}
Since $S \supset A$, we have $\conv(S) = \conv(A)$. Thus
all vertices of $\conv(S)$ belong to $A$. 
This implies that the integer points of the boundary of $\conv(S)$ also belong to
$A$ by Condition \ref{C:Parallelogram}. Let $b \in B$. 
Above lemmas show that as we move from $b$ in a direction parallel to either axis, 
we reach an integer point of an edge of $S$, which belongs to $A$.
If we move to the opposite direction of $b$ we again reach an integer point 
of $A$. 
This contradicts Condition \ref{C:Parallelogram}.

\subsection{Proof of Theorem \ref{thm:finite2}}

Finally we give a proof of Theorem \ref{thm:finite2}.
When we look at counterexamples in Figure  \ref{fig:2-counter} again, we notice that
the centroid of black circles is also a centroid of white circles.  Therefore
these counterexamples are impossible if we additionally impose 
the $3$-parallelogram condition.

Let $S\subset \ZZ^2$ be hole free. 
Since Lemma \ref{lemma:finite3} holds, we only need to prove 
that the $3$-parallelogram condition implies
$A\cap \conv(B) = \emptyset$ and $\conv(A)\cap B = \emptyset$.
We prove it by contradiction. 
Note that we only need to 
consider minimal triangles in $\conv(A)$, 
i.e., triangles with no points from $A$ except
the vertices $a_1, a_2, a_3$. 
Moreover, each face of this triangle has no integer points except
them since we have the $2$-parallelogram condition.

Theorem \ref{thm:finite2} is a consequence of the following lemma.

\begin{lem}
\label{lem:traiange}
Let $A = \set{a_1,a_2,a_3}$.
Assume that $\conv \set{a_i,a_j} \cap \ZZ^2 = \set{a_i,a_j}$ for any $i,j \in \set{1,2,3}$ and $B = (\conv(A)\cap \ZZ^2)\setminus A$ is not empty.
Then there exist $b_1, b_2, b_3\in B$ such that $a_1 + a_2 + a_3 = b_1 + b_2 + b_3$.
\end{lem}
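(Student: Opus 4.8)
The plan is to reduce to a normal form by an affine unimodular change of coordinates and then to a one-dimensional counting problem. First I would apply an element of $\GL(2,\ZZ)\ltimes\ZZ^2$, which preserves $\ZZ^2$, the hypotheses on $A$, and the conclusion: for an affine map $\phi(x)=Mx+t$ one has $\sum_i\phi(c_i)=M(\sum_i c_i)+3t$, so the identity $b_1+b_2+b_3=a_1+a_2+a_3$ is equivalent to its image. Using this I would arrange $a_1=(0,0)$, $a_2=(1,0)$, and, after a shear fixing the first edge, $a_3=(p,q)$ with $1\le p\le q-1$. The hypothesis that every edge is primitive becomes $\gcd(p,q)=\gcd(p-1,q)=1$; this forces $q$ odd, since for even $q$ the two conditions would make $p$ simultaneously odd and even. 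As the only boundary lattice points are then $a_1,a_2,a_3$, the set $B$ is exactly the set of interior lattice points, and Pick's formula gives $|B|=(q-1)/2$; in particular $B\neq\emptyset$ forces $q\ge 3$ and $p\ge 2$.

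Next I would parametrise the interior points by height. For fixed $y\in\{1,\dots,q-1\}$ the horizontal slice of the triangle has width $(q-y)/q<1$, so it contains at most one interior lattice point; writing its $x$-coordinate as $x(y)$ and $f(y):=q\,x(y)-py$, one checks $f(y)=(-py)\bmod q$, and the slice is nonempty exactly when $y$ lies in $Y:=\{y : f(y)+y\le q-1\}$, so $B$ is in bijection with $Y$ and $|Y|=(q-1)/2$. The target $b_1+b_2+b_3=(1+p,q)$ then amounts to choosing heights $y_1,y_2,y_3\in Y$ (repetitions allowed) with $\sum y_i=q$ and $\sum x(y_i)=1+p$. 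The key simplification I would exploit is that the second condition is automatic: since $f$ is linear modulo $q$, $\sum y_i=q$ gives $\sum f(y_i)\equiv-p\sum y_i\equiv0\pmod q$, while $y_i\in Y$ gives $\sum(f(y_i)+y_i)\le3(q-1)$, hence $\sum f(y_i)\le 2q-3<2q$; being a positive multiple of $q$ below $2q$ it equals $q$, and then $\sum x(y_i)=(\sum f(y_i)+p\sum y_i)/q=1+p$. Thus it suffices to find three elements of $Y$ summing to $q$.

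This combinatorial existence statement is the main obstacle, and I would settle it by a parity pigeonhole. First $1\in Y$, because $f(1)=q-p$ and $p\ge2$ give $f(1)+1=q-p+1\le q-1$; so it suffices to find two elements of $Y$ (with repetition) summing to $q-1$, since appending the height $1$ then yields a triple summing to $q$. Also $q-1\notin Y$, since $f(q-1)=p$ gives $f(q-1)+(q-1)\ge q+1$, so $Y\subseteq\{1,\dots,q-2\}$. Consider the involution $\iota(y)=q-1-y$ of $\{1,\dots,q-2\}$; as $q$ is odd it has the single fixed point $(q-1)/2$ and $(q-3)/2$ transpositions. If the fixed point lay outside $Y$ and no transposition were contained in $Y$, then $Y$ would meet each orbit in at most one point, forcing $|Y|\le(q-3)/2$ and contradicting $|Y|=(q-1)/2$. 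Hence either $(q-1)/2\in Y$, so $y=y'=(q-1)/2$ works, or some transposition $\{y,q-1-y\}\subseteq Y$ works; in either case two elements of $Y$ sum to $q-1$. Carrying the three resulting heights back through the normal-form coordinates produces $b_1,b_2,b_3\in B$ with $b_1+b_2+b_3=a_1+a_2+a_3$.

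I expect the routine parts, namely the normal form and the Pick count, to be straightforward; the genuine content is the observation that one needs only the \emph{heights} to sum to $q$, after which the involution pigeonhole disposes of the existence question cleanly.
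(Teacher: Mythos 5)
Your proof is correct, but it takes a genuinely different route from the paper's. The paper argues directly in the given coordinates: after translating $a_3$ to the origin it fixes inward normals $a_1^{\perp}, a_2^{\perp}, a_{12}^{\perp}$ to the three edges, chooses $b_1$ and $b_2$ as maximizers over $B$ of the linear functionals $\langle\,\cdot-a_1,a_2^{\perp}\rangle$ and $\langle\,\cdot-a_2,a_1^{\perp}\rangle$, sets $b_3=a_1+a_2-b_1-b_2$, and verifies through three inequalities that $b_3$ lies in the open triangle and hence in $B$; the whole trick is the extremal choice of $b_1,b_2$, and the proof is short but somewhat opaque. You instead pass to the unimodular normal form $(0,0),(1,0),(p,q)$, identify $B$ with the set $Y$ of admissible heights via $f(y)=(-py)\bmod q$, get $|Y|=(q-1)/2$ from Pick's theorem, observe that the horizontal condition $\sum x(y_i)=1+p$ is automatic once $\sum y_i=q$ (a genuinely nice reduction, using $0<\sum f(y_i)\le 2q-3$ and divisibility by $q$), and close with the involution pigeonhole on $\{1,\dots,q-2\}$. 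I checked the details --- the exclusion of $p\in\{0,1\}$ and of $y=q-1$, the membership $1\in Y$ from $p\ge 2$, and the orbit count $1+(q-3)/2=(q-1)/2$ --- and they all hold; repeated $b_i$ are permitted by the statement, so the cases $y=y'=(q-1)/2$ or $|B|=1$ cause no trouble. What each approach buys: the paper's proof is shorter and avoids any normalization or lattice-point count; yours is longer but makes the structure of $B$ completely explicit (exactly one interior point per admissible height) and reduces the existence question to a transparent one-dimensional counting statement.
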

\begin{figure}
\begin{center}
	\includegraphics[clip, width =0.6\hsize]{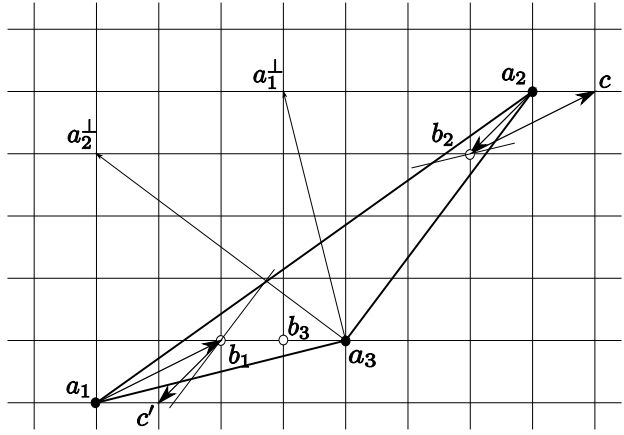}
\caption{A minimal triangle}
\label{fig:triangle}
\end{center}
\end{figure}
\begin{proof}
We can assume that $a_3 = 0$ without loss of generality.
Let us fix $a_1^{\perp}$, $a_2^{\perp}$ and $a_{12}^{\perp}$
such that 
\begin{align*}
&\Braket{a_1,a_1^{\perp}}=0,&
&\Braket{a_2,a_2^{\perp}}=0,&
&\Braket{a_1-a_2,a_{12}^{\perp}}=0,\\
&\Braket{a_2,a_1^{\perp}}>0,&
&\Braket{a_1,a_2^{\perp}}>0,&
&\Braket{-a_1,a_{12}^{\perp}}=\Braket{-a_2,a_{12}^{\perp}}>0.
\end{align*}
Then, for $x \in \RR^2$,
\begin{align*}
x\in \conv(A)
\iff 
\braket{x,a_1^\perp}\geq 0,
\braket{x,a_2^\perp}\geq 0,
\braket{x-a_1,a_{12}^\perp}\geq 0.
\end{align*}
By definition it also follows that 
$\braket{b-a_2,a_1^{\perp}} < 0$,
$\braket{b-a_1,a_2^{\perp}} < 0$ and
$\braket{b,a_{12}^{\perp}} < 0$
for all $b \in B$.
Let 
\begin{align*}
b_1&=\argmax_{b\in B} \braket{b-a_1,a_2^\perp}, \\
b_2&=\argmax_{b\in B} \braket{b-a_2,a_1^\perp}, \\
b_3&=a_1+a_2-b_1 -b_2.
\end{align*}
We show that $b_3\in B$, which concludes the lemma.
Let $c = b_1+b_2 -a_1$. 
Since $\Braket{c-a_2,a_1^\perp}=\braket{b_1-a_2,a_1^\perp}+\braket{b_2,a_1^\perp}>\braket{b_1-a_2,a_1^\perp}$,
by the definition of $b_2$, 
we get $c \notin \conv(A)$.
By direct calculation we obtain 
\begin{align*}
\braket{c,a_1^\perp}&=\braket{b_1,a_1^\perp}+\braket{b_2,a_1^\perp}>0,\\
\braket{c-a_1,a_{12}^\perp}&=\braket{b_1-a_1,a_{12}^\perp}+\braket{b_2-a_1,a_{12}^\perp}>0,
\end{align*}
which implies $\braket{c,a_2^\perp} < 0$.
Since $b_3=a_2-c$,
$\braket{b_3,a_2^\perp}=\braket{a_2-c,a_2^\perp}=-\braket{c,a_2^\perp} >  0$.
Similarly we also obtain 
$c' = b_1+b_2 -a_2 \not \in \conv(A)$ and
$\braket{b_3,a_1^\perp} > 0$.
Since
 $\braket{b_3-a_1,a_{12}^\perp}=\braket{b_3,a_{12}^\perp}+\braket{-a_1,a_{12}^\perp}>0$,
we have $b_3 \in \Int(\conv(A))$, which implies $b_3\in B$.
\end{proof}

\section{Conclusions and  open problems}
\label{sec:conclusion}
In this paper, we have proved the equivalence of Conditions \ref{C:Hyperplanes} and \ref{C:Parallelogram}
when $S=A\cup B$ is the whole $\ZZ^d$ or a 2-dimensional finite integrally convex set. 
However we have shown that it does not hold 
in $d$-dimensional finite sets for $d \ge 3$. 
For $S=\ZZ^d$ 
Condition \ref{C:Ray} is also equivalent to Conditions \ref{C:Hyperplanes} and \ref{C:Parallelogram}, 
but they are not equivalent in  the finite case. 
It is not clear what causes this difference.
We have also shown  the equivalence of  Condition \ref{C:Hyperplanes}
and 
the $3$-parallelogram condition for finite hole free $S\subset \ZZ^2$.

For $d\ge 3$ and finite $S$, we would like to obtain a useful condition which is equivalent to
Condition \ref{C:Hyperplanes}.  
Such a condition is relevant for extending the result of \cite{hara}
to higher dimensions.  
From various counterexamples in Section \ref{sec:finite}, extension
of our results for $d=2$ to higher dimensions may be difficult to prove.
In fact, in Examples 
\ref{ex:d3-4par-1} and \ref{ex:d3-4par-2}
we see that a natural generalization of Theorem \ref{thm:finite2} does not hold.
Nevertheless, we conjecture
that a generalization of Theorem \ref{thm:finite} holds in higher dimensions.
\begin{conjecture}
[Analogue of Theorem \ref{thm:finite}]
If $S$ is a finite
integrally convex set in $\RR^d$ and $A,B$ satisfy  
the $d$-parallelogram condition, then
Condition $H$ holds.  
\end{conjecture}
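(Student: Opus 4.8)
The plan is to prove the conjecture by induction on the dimension $d$, following the same high-level architecture as the proof of Theorem \ref{thm:finite}. The base cases $d\le 2$ are covered by Theorem \ref{thm:finite}. For the inductive step I would reduce the construction of the full flag in Condition \ref{C:Hyperplanes} to producing a single supporting hyperplane $H_{d-1}$ of $\conv(S)$ that separates $A$ and $B$ in $\RR^d$: the face $S\cap H_{d-1}$ is again integrally convex (by the lemma in Section \ref{sec:finite} that faces of finite integrally convex sets are integrally convex), its trace $A\cap H_{d-1},\,B\cap H_{d-1}$ inherits the $d$-parallelogram condition and hence the $(d-1)$-parallelogram condition inside the $(d-1)$-dimensional affine hull of $H_{d-1}$, so the induction hypothesis supplies the sub-flag $H_k\subset\cdots\subset H_{d-2}\subset H_{d-1}$, which together with $H_{d-1}\subset H_d=\RR^d$ gives Condition \ref{C:Hyperplanes}.

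Everything therefore rests on the separation step, and here I would imitate Lemma \ref{lemma:finite3}. If $\conv(A)\cap\conv(B)=\emptyset$ the two compact convex bodies are strictly separated by a hyperplane meeting no point of $S$, and Condition \ref{C:Hyperplanes} follows at once (any flag refining that hyperplane works, the lower-dimensional separations being vacuous). The degenerate situation in which one hull contains the other set, say $B\subset\conv(A)$, I would treat exactly as in the final part of the proof of Theorem \ref{thm:finite}: since the edges of a finite integrally convex set run in directions of $\{-1,0,1\}^d$, one moves from a point of $B$ along a coordinate direction to reach boundary integer points, which must lie in $A$, and extracts a forbidden equal-count sum. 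Consequently the whole problem is to rule out that the relative interiors of $\conv(A)$ and $\conv(B)$ meet.

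To this end I would invoke Carath\'eodory together with a Radon-type minimality argument: if the relative interiors meet, there is a point $x$ lying in the relative interior of a simplex $\conv\{a_0,\dots,a_m\}$ with $a_i\in A$ and of a simplex $\conv\{b_0,\dots,b_n\}$ with $b_j\in B$, with the whole configuration chosen minimal. Minimality forces the $m+n+2$ vertices to carry a single affine dependence, so they span an affine space of dimension $m+n$ and hence $m+n\le d$. Writing the two barycentric representations $x=\sum_i\lambda_i a_i=\sum_j\mu_j b_j$ and clearing denominators to a common integer $N$ yields $\sum_i c_i a_i=\sum_j d_j b_j$ with positive integer coefficients and $\sum_i c_i=\sum_j d_j=N$; if $N\le d$ this is precisely a violation of the $d$-parallelogram condition, namely a sum of $N$ points of $A$, counted with multiplicity, equal to a sum of $N$ points of $B$. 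This generalizes Lemma \ref{lemma:finite4}, which is the case $m=n=1$, $N=2$.

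The main obstacle is the bound $N\le d$. Hole freeness alone does not suffice: Examples \ref{ex:d3-4par-1} and \ref{ex:d3-4par-2} realize exactly the outcome $N=d+1$, which respects the $(d+1)$-parallelogram condition and so cannot be excluded without more. The added hypothesis of integral convexity must be used to control the lattice geometry of the minimal intersecting simplices, morally to bound their normalized volume, equivalently the denominators of the barycentric coordinates of the crossing point, and to guarantee that a minimal intersecting configuration can be taken small enough (as in the area-and-minimality argument behind Lemma \ref{lemma:finite4} for $d=2$) that $N\le d$. Making this denominator bound precise, presumably by applying the local identity $\conv(S\cap N(x))=\conv(S)\cap\conv(N(x))$ of Proposition \ref{prop:integrally2} along the segment through $x$, is the crux on which the conjecture turns.
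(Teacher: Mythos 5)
The statement you are proving is presented in the paper as a \emph{conjecture}; the authors offer no proof, so the only question is whether your argument is complete on its own terms --- and by your own admission it is not. The decisive step is the one you defer to your last paragraph: after reducing a point of $\conv(A)\cap\conv(B)$ via Carath\'eodory and a Radon-type minimality argument to an identity $\sum_i c_i a_i=\sum_j d_j b_j$ with $\sum_i c_i=\sum_j d_j=N$, you need $N\le d$ to contradict the $d$-parallelogram condition, and you do not prove this. Moreover, the route you suggest --- using integral convexity to bound the normalized volume of the minimal crossing simplices --- would not yield the bound even if carried out: $N$ is the common denominator of the barycentric coordinates of the \emph{crossing point}, which depends on where the two simplices meet and can be arbitrarily large even when both simplices are unimodular (two crossing primitive segments in the plane can meet at a point dividing each in ratio $1:M$ for any $M$). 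The two-dimensional Lemma \ref{lemma:finite4} does not proceed by bounding denominators; it is a direct lattice-geometric argument special to a pair of crossing segments, and Examples \ref{ex:d3-4par-1} and \ref{ex:d3-4par-2} show its naive extension already fails for $d=3$ under hole freeness. Whether integral convexity rescues it is precisely the content of the conjecture, so your proposal relocates the difficulty rather than resolving it.

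Two secondary points. First, your inductive frame is either unnecessary or unjustified: when $\conv(A)\cap\conv(B)=\emptyset$ you may choose the strictly separating hyperplane to miss the finite set $S$ entirely, so no recursion into a lower-dimensional section ever occurs; and if you did need to recurse into $S\cap H_{d-1}$ for a separating hyperplane that meets $S$, the lemma on faces of integrally convex sets would not apply, since such a section need not be a face of $\conv(S)$. Second, the containment case $B\subset\conv(A)$ is not a verbatim copy of the $d=2$ argument: concluding that all lattice points of the boundary of $\conv(S)$ lie in $A$ uses, in dimension two, that the boundary consists of edges whose endpoints are vertices belonging to $A$; for $d\ge 3$ the facets are higher-dimensional, and you would need an induction over faces (with the parallelogram condition restricted to each face, and the direction of travel from $b\in B$ chosen so as to hit lattice points of a face already known to lie in $A$) that your sketch does not set up.
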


Another open problems concern the $k$-convexity of a set $A\in \ZZ^d$.
The $k$-convex hull of $A$ is 
a subset of the $(k+1)$-convex hull of $A$. 
A hole in $\conv(A)\cap \ZZ^d$ can be classified by
the first $k$ such that it belongs to the $k$-convex hull of $A$. 
For example, in Example \ref{ex:13-7-4}
with $A=\{(0,0,0)$, $(13,0,0)$, $(0,7,0)$, $(0,0,4)\}$
by computer search we found 
that the difference between the 2-convex hull and the 1-convex hull consists of 
a single point $(4,3,1)$.  $(6,2,1)$ discussed in 
Example \ref{ex:13-7-4} 
the difference between the 3-convex hull and the 2-convex hull.
It is of interest to classify holes by the $k$-convexity and give conditions
on existence or non-existence of holes according to this classification.

\begin{acknowledgment}
We are grateful to Kazuo Murota for very helpful comments.
\end{acknowledgment}


\begin{thebibliography}{99}

\bibitem{favat-tardella}
P.\ Favati and F.\  Tardella, Convexity in nonlinear integer programming,
{\it Ricerca Operativa} {\bf 53} (1990), 3--44.

\bibitem{hara} 
H.\  Hara, A.\ Takemura and R.\  Yoshida,
On connectivity of fibers with positive marginals in multiple logistic regression,
{\it Journal of Multivariate Analysis}
{\bf 101} (2010), 909--925.

\bibitem{hty}
R.\ Hemmecke, A.\ Takemura and R.\ Yoshida,
Computing holes in semi-groups and its applications to transportation problems,
{\it Contributions to Discrete Mathematics} {\bf 4} (2009),  
81--91.


\bibitem{kas06} 
A.\ M.\ Kasprzyk,
Toric Fano threefolds with terminal singularities,
{\it Tohoku Mathematical Journal (2)} {\bf 58} no.\ 1 (2006), 
101--121. 

\bibitem{kas08} 
A.\ M.\ Kasprzyk,
Canonical toric Fano threefolds,
preprint (2008).
{\tt arXiv:0806.2604v2}.
To appear in {\it Canadian Journal of Mathematics}.


\bibitem{murota98}
K.\ Murota, 
Discrete convex analysis,
{\it Mathematical Programming} {\bf 83} (1998), 
313--371.  

\bibitem{murota03} 
K.\ Murota, 
{\it Discrete Convex Analysis},
Monographs on Discrete Mathematics and Applications, 
SIAM, Philadelphia, 2003.

\bibitem{murota09}
K.\  Murota, 
Recent developments in discrete convex analysis, 
in: W.\ Cook, L.\ Lovasz and J.\ Vygen, eds.,
{\it Research Trends in Combinatorial Optimization, Bonn 2008},
Springer-Verlag, Berlin, 2009, Chapter 11, 219--260.

\bibitem{Ohsugi-Hibi-ample}
H.\ Ohsugi and  T.\ Hibi,
Non-very ample configurations arising from contingency tables,
preprint (2009).
{\tt arXiv:0904.3681v2}.
To appear in {\it Annals of the Institute of Statistical Mathematics}. 

\bibitem{sebo}
Andr\'as  Seb\H o, 
An introduction to empty lattice simplices.
in: {\it Integer Programming and Combinatorial Optimization}, 
Lecture Notes in Computer Science {\bf 1610},  400--414,  
Springer, Berlin,  1999. 

\bibitem{zie}
G.\ M.\ Ziegler, 
{\it Lectures on Polytopes},
Springer-Verlag, New York, 1995. 
\end{thebibliography}
\end{document}